\newtheorem{theorem}{Theorem}[section]
\newtheorem{lemma}[theorem]{Lemma}
\newtheorem{quest}[theorem]{Question}
\newcommand{\thistheoremname}{}
\newtheorem*{genericthm*}{\thistheoremname}
\newenvironment{namedthm*}[1]
  {\renewcommand{\thistheoremname}{#1}
  \begin{genericthm*}}
  {\end{genericthm*}}
\theoremstyle{definition}
\newtheorem{definition}[theorem]{Definition}
\numberwithin{equation}{section}
\DeclareMathOperator{\diam}{diam}
\DeclareMathOperator{\co}{co}
\begin{document}
\title[Nonexpansive dynamical systems]{Amenable semigroups and nonexpansive
dynamical systems}
\author[A. Wi\'{s}nicki]{Andrzej Wi\'{s}nicki}
\address{Department of Mathematics, Pedagogical University of Krakow,
PL-30-084 Cracow, Poland}
\email{andrzej.wisnicki@up.krakow.pl}
\date{}

\begin{abstract}
We characterize amenability of subspaces of $C(S)$, where $S$ is a
semitopological semigroup, in terms of fixed point properties of
nonexpansive actions. In particular, we give a complete characterization of
a semitopological semigroup with a left invariant mean on WAP(S) that
answers a question of A.T.-M. Lau and Y. Zhang in the affirmative. We also
propose a new approach to Lau's problem concerning a counterpart of
Day-Mitchell's characterization of amenable semigroups and show some partial
results, in the case of weak$^{\ast }$ compact convex sets with the
Radon-Nikod\'{y}m property, and in the duals of $M$-embedded Banach spaces.
\end{abstract}

\maketitle



\section{Introduction}

A strong connection between amenability and fixed point properties of
semigroup actions was investigated in a number of papers. We recall only a
few classical results. Day \cite{Da} characterized amenable semigroups in
terms of the fixed point property for continuous affine mappings acting on
compact convex sets in a locally convex space and Mitchell \cite{Mi}
extended Day's result to semitopological semigroups. Ryll-Nardzewski \cite%
{Ry0} used his fixed point theorem to show the existence of the left
invariant mean on the space $WAP(G)$ of weakly almost periodic functions on
a group $G.$ Amenability of various subspaces of $C(S)$ are often
characterized in the context of affine actions on a semigroup $S$ (see,
e.g., \cite{La, La1, Mi, Pa}) and the question naturally arises as to
whether similar description can be given with the use of nonexpansive, i.e.,
$1$-Lipschitz actions. In 1973, Lau \cite{La} described amenability of $%
AP(S) $, the space of almost periodic functions on a semigroup $S$, in terms
of the fixed point property for nonexpansive mappings acting on compact
convex sets and it appears to be the only full characterization of this type.

This paper is motivated by the recent study of Lau and Y. Zhang on fixed
point properties of semigroups of nonexpansive mappings (see \cite{LaZh1}--%
\cite{LaZh4}). Several relations among fixed point properties on weakly and
weak$^{\ast }$ compact convex sets are discussed in these papers (see the
diagram on page 2553 in \cite{LaZh1}). In particular, \cite[Theorem 3.4]%
{LaZh1} gives a characterization of a separable semitopological semigroup $S$
with a left invariant mean on $WAP(S)$ in terms of the fixed point property
for nonexpansive mappings acting on weakly compact convex subsets of a
locally convex space and Question 4 in \cite{LaZh2} asks about the full such
characterization.

In this paper we take forward this program. In Section 3, we apply the
notion of fragmentability to drop the separability assumption from some
results in \cite{LaZh1}. In particular, we give a complete characterization
of a semitopological semigroup that has a left invariant mean on weakly
almost periodic functions and thus answer \cite[Question 4]{LaZh2} in the
affirmative (see Theorem \ref{WAP}). We also extend a fixed point theorem of
Hsu \cite{Hsu} (see also \cite[Theorem 3.10]{LaZh2}) from left reversible
discrete semigroups to left amenable semitopological semigroups. Note that
the case of Banach spaces was studied in \cite{Wi1} but the general case of
locally convex spaces is more complicated. Another open problem in \cite[p.
2542]{LaZh1} (see also \cite[Problem 1]{LaZh4}) concerns the chain of
implications: $(G)\Rightarrow (F)\Rightarrow (E)\Rightarrow (D)$ (see
Section 2 for the definitions). We show that $(D)\Leftrightarrow (E)$ (the
relation $(E)\not\Rightarrow (F)$ was proved in \cite{LaZh1}, and whether $%
(F)\Rightarrow (G)$ is unknown, see also the diagram below).

Another aspect of our work that has not yet been studied is the application
of the Bruck retraction method to get some qualitative results about the set
of fixed points of $S$ (see Theorem \ref{q}).

The following diagram summarizes the relations among the fixed point
properties of semitopological semigroups acting on weakly compact convex
sets in locally convex spaces discussed in Section 3 (compare the diagram on
p. 2553 in \cite{LaZh1}):

\bigskip \bigskip

\hbox{\hspace{-1.5cm}
{\small $\begin{array}{ccccccc}
&  &
\begin{array}{c}
LUC(S) \\
\text{has LIM}\end{array}\medskip &  &  &  &  \\
&  & \Downarrow &  &  &  &  \\
\begin{array}{c}
S\text{ is left reversible} \\
\&\text{ metrizable}\end{array}
& \Rightarrow & (G^{\ast }) & \Rightarrow & (F^{\ast }) & \Leftrightarrow &
WAP(S)\cap LUC(S)\text{ has LIM} \\
&  &
\begin{array}{cc}
\Uparrow & \not\Downarrow\end{array}
&  &
\begin{array}{cc}
\Uparrow & \not\Downarrow\end{array}
&  &  \\
&  & (G) & \Rightarrow & (F) &
\begin{array}{c}
\Rightarrow \\
\not\Leftarrow\end{array}
& (E)\Leftrightarrow (D)\Leftrightarrow AP(S)\text{ has LIM} \\
&  & \Uparrow \medskip &  & \Updownarrow \medskip &  &  \\
&  &
\begin{array}{c}
LMC(S) \\
\text{has LIM}\end{array}
&  &
\begin{array}{c}
WAP(S) \\
\text{has LIM.}\end{array}
&  &
\end{array}$}
}

\bigskip \bigskip \bigskip

An old problem in fixed point theory, posed by A. T.-M. Lau in 1976 (see
\cite[Problem 4]{La1}, \cite[Question 1]{LaZh2}), concerns a counterpart of
the well-known Day-Mitchell characterization of amenable semigroups. In
Section 4, we extend the techniques of Section 3 to propose a new approach
to Lau's problem by reducing it to a certain question about the norm support
of a weak$^{\ast }$ Borel Radon measure on a weak$^{\ast }$ compact set. As
a consequence, we show that Lau's problem has an affirmative solution in the
case of weak$^{\ast }$ compact convex sets with the Radon-Nikod\'{y}m
property, in particular for subsets of the dual of an Asplund space. We also
extend the corresponding results in \cite{LaZh2} concerning $L$-embedded
subsets of Banach spaces. In particular, Lau's problem is also solved in the
affirmative for weak$^{\ast }$ compact convex sets in the duals of $M$%
-embedded Banach spaces.

\section{Preliminaries}

Let $S$ be a semitopological semigroup, i.e., a semigroup with a Hausdorff
topology such that the mappings $S\ni s\rightarrow ts$ and $S\ni
s\rightarrow st$ are continuous for each $t\in S.$ Let $\ell ^{\infty }(S)$
be the Banach space of bounded complex-valued functions on $S$ with the
supremum norm. For $s\in S$ and $f\in \ell ^{\infty }(S),$ we define the
left and right translations of $f$ in $\ell ^{\infty }(S)$ by%
\begin{equation*}
L_{s}f(t)=f(st),\ R_{s}f(t)=f(st)
\end{equation*}%
for every $t\in S.$ Let $Y$ be a closed linear subspace of $\ell ^{\infty
}(S)$ containing constants and invariant under translations, i.e., $%
L_{s}(Y)\subset Y,$ $R_{s}(Y)\subset Y.$ Then a linear functional $\mu \in
Y^{\ast }$ is called a left (respectively right) invariant mean on $Y$ (LIM
or RIM, for short), if $\left\Vert \mu \right\Vert =\mu (1)=1$ and $\mu
(L_{s}f)=\mu (f)$ (respectively $\mu (R_{s}f)=\mu (f)$) for each $s\in S$
and $f\in Y.$

Let $C(S)$ be the closed subalgebra of $\ell ^{\infty }(S)$ consisting of
bounded continuous functions and let $f\in C(S)$. We will say that $f\in
LUC(S)$ $(f\in WLUC(S))$ if the mapping $S\ni s\rightarrow L_{s}f$ from $S$
to $C(S)$ is continuous when $C(S)$ has the norm topology (weak-topology).
Similarly, $f\in LMC(S)$ if the mapping $S\ni s\rightarrow L_{s}f\in C(S)$
is continuous when $C(S)$ is given the topology induced by the
multiplicative means on $C(S).$ (Recall that a mean $\mu $ is multiplicative
if $\mu (fg)=\mu (f)\mu (g)$). A bounded continuous function $f$ on $S$ is
called almost periodic (weakly almost periodic) if $\{L_{s}f:s\in S\}$ is
relatively compact in the norm topology (weak topology) of $C(S).$ The space
of almost periodic functions on $S$ is denoted by $AP(S)$ and the space of
weakly almost periodic functions by $WAP(S)$. In general,
\begin{eqnarray*}
AP(S) &\subset &LUC(S),\ AP(S)\subset WAP(S), \\
LUC(S) &\subset &WLUC(S)\subset LMC(S)\subset C(S),
\end{eqnarray*}%
and if $S$ is discrete, then%
\begin{equation*}
AP(S)\subset WAP(S)\subset LUC(S)=\ell ^{\infty }(S).
\end{equation*}%
It is not difficult to show that all these spaces are translation-invariant.
A semigroup $S$ is called left amenable if there exists a left invariant
mean on $LUC(S).$

Let $K$ be a topological space. A semigroup $S$ is said to act on $K$ (from
the left) if there is a map $\pi :S\times K\rightarrow K$ such that $%
s_{1}(s_{2}x)=(s_{1}s_{2})x$ for all $s_{1},s_{2}\in S$ and $x\in K$, where
as usual, we write $sx$ instead of $\pi (s,x)$. We say that the action $\pi $
is separately continuous if all orbit maps $\rho _{x}:S\rightarrow K,\rho
_{x}(s)=sx,$ and all translations $\lambda _{s}:K\rightarrow K,\lambda
_{s}(x)=sx,$ are continuous.

By a dynamical system we mean a pair $(S,K)$, where $S$ is a semitopological
semigroup, $K$ a topological space and there is a separately continuous
semigroup action $\pi :S\times K\rightarrow K$ of $S$ on $K.$ We will say
that a dynamical system is continuous if the action $\pi $ is jointly
continuous. A dynamical system is said to be compact if $K$ is compact. The
enveloping (or Ellis) semigroup $E(S,K)$ of a compact dynamical system $%
(S,K) $ is the closure in the product topology of the set $\{\lambda
_{s}:K\rightarrow K:s\in S\}$ of $S$-translations in the compact semigroup $%
K^{K}.$

Let $(X,\tau )$ be a locally convex space whose topology $\tau $ is
determined by a family $Q$ of (continuous) seminorms on $X$ and let $%
K\subset X$. We say that a dynamical system $(S,K)$ is $Q$-nonexpansive (or
briefly, nonexpansive if $Q$ is fixed) if $p(sx-sy)\leq p(x-y)$ for every $%
p\in Q,s\in S$ and $x,y\in K.$ A subset $K$ of $X$ is said to have $Q$%
-normal structure if for each $Q$-bounded subset $A$ of $K$ that contains
more than one point, there is $u\in \overline{%
\co%
}K$ and $p\in Q$ such that $\sup \{p(x-u):x\in A\}<\sup \{p(x-y):x,y\in A\}.$
Recall that $A$ is $Q$-bounded if for each $p\in Q$ there is $d>0$ such that
$p(x)\leq d$ for all $x\in A.$ Finally, a system $(S,K)$ is said to have a
fixed point if there is $x\in K$ such that $sx=x$ for every $s\in S.$

Let $S$ be a semitopological semigroup and $(X,\tau )$ a locally convex
space whose topology is determined by a family $Q$ of seminorms on $X$.
Following \cite{LaZh1}, we shall consider the following fixed point
properties for a semigroup $S$: 

\begin{enumerate}
\item[$(D)$:] \textit{Every nonexpansive dynamical system }$(S,(K,\tau ))$%
\textit{, where }$K$\textit{\ is a compact convex subset of a locally convex
space, has a fixed point.}

\item[$(E)$:] \textit{Every nonexpansive system }$(S,(K,\mathrm{weak}))$%
\textit{, where }$K$\textit{\ is a weakly compact convex subset of a locally
convex space, such that the translations }$\lambda _{s}:K\rightarrow K$%
\textit{\ are equicontinuous (in weak topology), has a fixed point.}

\item[$(E^{\prime })$:] \textit{Every nonexpansive system }$(S,(K,\mathrm{%
weak}))$\textit{, where }$K$\textit{\ is weakly compact convex and has }$Q$%
\textit{-normal structure, such that the translations }$\lambda
_{s}:K\rightarrow K$\textit{\ are (weakly) equicontinuous, has a fixed point.%
}

\item[$(F)$:] \textit{Every nonexpansive system }$(S,(K,\mathrm{weak}))$%
\textit{, where }$K$\textit{\ is a weakly compact convex subset of a locally
convex space, such that the enveloping semigroup }$E(S,(K,\mathrm{weak}))$%
\textit{\ consists of (weakly) continuous functions, has a fixed point.}

\item[$(F^{\ast })$:] \textit{Every nonexpansive and continuous dynamical
system }$(S,(K,\mathrm{weak}))$\textit{, where }$K$\textit{\ is a weakly
compact convex subset of a locally convex space, such that the enveloping
semigroup }$E(S,(K,\mathrm{weak}))$\textit{\ consists of (weak-ly)
continuous functions, has a fixed point.}

\item[$(G)$:] \textit{Every nonexpansive dynamical system }$(S,(K,\mathrm{%
weak}))$\textit{, where }$K$\textit{\ is a weakly compact convex subset of a
locally convex space, has a fixed point.}

\item[$(G^{\ast })$:] \textit{Every nonexpansive and continuous dynamical
system }$(S,(K,\mathrm{weak}))$\textit{, where }$K$\textit{\ is a weakly
compact convex subset of a locally convex space, has a fixed point.}\medskip
\end{enumerate}

Notice that we have changed the notation slightly, partly to shorten it, and
partly because we feel that a little change in perspective may be fruitful
for future research.

\section{Semigroup actions on locally convex spaces}

The central theme in this paper is the notion of fragmentability invented by
Jayne and Rogers \cite{JaRo}. Let $(K,\omega )$ be a
topological space and let $\rho $ be a pseudometric on $K$. We say that $%
(K,\omega )$ is $(\omega ,\rho )$-fragmented if for every $\varepsilon >0$
and a nonempty set $A\subset K$ there is an $\omega $-open set $U$ in $K$
such that $U\cap A\neq \emptyset $ and $\rho $-$%
\diam%
(U\cap A)<\varepsilon $. It was proved by Namioka\ (see \cite{Na1}) that
every weakly compact subset of a Banach space is (weak,norm)-fragmented. We
need the following generalization. We say that a subset $K$ of a locally
convex space $(X,\tau )$ is $(\omega ,\tau )$-fragmented if for every $\tau $%
-open neighbourhood $V$ of $0$ and a nonempty set $A\subset K,$ there is an $%
\omega $-open set $U$ in $X$ such that $U\cap A\neq \emptyset $ and $(U\cap
A)-(U\cap A)\subset V.$ One of our main tools is the following result of
Megrelishvili\ \cite[Prop. 3.5]{Me} (see also \cite[Lemma 1.1]{GlMe}).

\begin{lemma}
\label{Me}Every weakly compact subset $K$ of a locally convex space $(X,\tau
)$ is $(\mathrm{weak},\tau )$-fragmented.
\end{lemma}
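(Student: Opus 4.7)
\medskip

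\noindent\textbf{Proof plan for Lemma \ref{Me}.} The plan is to reduce the statement, by a standard quotient construction, to Namioka's classical result that every weakly compact subset of a Banach space is (weak, norm)-fragmented.

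First I would observe that the family of seminorms $Q$ forms a base for the $\tau$-topology at $0$ of sets of the form $\{x\in X:p_i(x)<\varepsilon,\ i=1,\dots,n\}$. Setting $p=\max(p_1,\dots,p_n)$ gives a continuous seminorm whose $\varepsilon$-ball is contained in a prescribed neighbourhood $V$ of $0$. Hence it suffices to prove the following: for every continuous seminorm $p$ on $X$, every $\varepsilon>0$, and every nonempty $A\subset K$, there is a weakly open $U\subset X$ with $U\cap A\neq\emptyset$ and $p$-$\diam(U\cap A)<\varepsilon$.

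Next, I would factor out the kernel of $p$. Let $N=\ker p$ and let $q:X\to X/N$ be the quotient map; equip $X/N$ with the norm $\|q(x)\|:=p(x)$, and let $\widehat{X/N}$ be its Banach-space completion. Since $q$ is linear and $p$-continuous, hence $\tau$-continuous, it is weak-to-weak continuous, so $q(K)$ is weakly compact in $\widehat{X/N}$. By Namioka's theorem, $q(K)$ is (weak, norm)-fragmented in $\widehat{X/N}$. Apply this to the nonempty set $q(A)\subset q(K)$: there is a weakly open set $U'\subset\widehat{X/N}$ with $U'\cap q(A)\neq\emptyset$ and $\|\cdot\|$-$\diam(U'\cap q(A))<\varepsilon$.

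Finally, I would pull back by setting $U=q^{-1}(U')$, which is weakly open in $X$ by the weak-to-weak continuity of $q$. If $a\in A$ satisfies $q(a)\in U'$, then $a\in U\cap A$, so $U\cap A\neq\emptyset$. For any $x,y\in U\cap A$, $p(x-y)=\|q(x)-q(y)\|<\varepsilon$, which gives the required $p$-diameter estimate and hence $(U\cap A)-(U\cap A)\subset V$.

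The only part that requires any care is ensuring that the Namioka fragmentability result can be invoked even though $X/N$ itself need not be complete; this is handled by passing to the completion and using that a continuous linear map into a normed space, composed with the isometric embedding into the completion, remains weak-to-weak continuous. No further genuine obstacle should arise, since everything else is routine functional-analytic bookkeeping.
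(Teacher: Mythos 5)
Your proposal is correct. One thing to be aware of: the paper does not actually prove Lemma \ref{Me} at all --- it quotes it from Megrelishvili \cite[Prop. 3.5]{Me} (see also \cite[Lemma 1.1]{GlMe}) --- so there is no internal argument to match; what you have written is a self-contained derivation from the Banach-space case of Namioka, which the paper records separately with the reference \cite{Na1}. Your route (reduce an arbitrary $\tau$-neighbourhood $V$ of $0$ to a single continuous seminorm $p$ with $\{x:p(x)<\varepsilon\}\subset V$; pass to the quotient $X/\ker p$ normed by $\|q(x)\|=p(x)$ and then to its completion; note that $q$ followed by the isometric embedding is $\tau$-to-norm continuous and hence weak-to-weak continuous, so $q(K)$ is weakly compact in a Banach space; apply Namioka's (weak, norm)-fragmentability to $q(A)\subset q(K)$; pull the weakly open set back via $q^{-1}$ and use $p(x-y)=\|q(x)-q(y)\|$) is complete and each step checks: the quotient norm is well defined since $\ker p=\ker q$, weak-to-weak continuity follows because every functional on the completion pulls back to a $\tau$-continuous functional on $X$, and the final inclusion $(U\cap A)-(U\cap A)\subset\{x:p(x)<\varepsilon\}\subset V$ gives exactly the fragmentability condition as defined in the paper. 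In short, where the paper buys the lemma by citation, your quotient argument proves it with Namioka's theorem as the only black box --- the same ingredient the paper itself takes for granted.
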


It follows that if the topology $\tau $ is determined by a family $Q$ of
seminorms on $X$ and if $K\subset X$ is weakly compact, then for every $%
\varepsilon >0,$ $p_{1},...,p_{n}\in Q$ and a nonempty set $A\subset K$,
there is a weakly open set $U$ in $X$ such that $U\cap A\neq \emptyset $ and
$p_{i}(x-y)<\varepsilon $ for every $x,y\in U\cap A$ and $i=1,...,n.$

Let $\mu $ be a probability Radon measure on a compact topological space $K.$
Recall that the support of $\mu $ is defined as the complement of the set of
points that have neighborhoods of measure $0$. It is well-known that $\mu (%
\mathrm{supp}(\mu ))=\mu (K).$ A measure $\mu $ is called $S$-invariant for
a dynamical system $(S,K)$ if $\mu (s^{-1}(A))=\mu (A)$ for every Borel set $%
A\subset K$ and $s\in S.$ The following lemma is the crucial observation for
this section.

\begin{lemma}
\label{local}Let $(S,(K,\mathrm{weak}))$ be a $Q$-nonexpansive dynamical
system, where $K$ is a minimal weakly compact $S$-invariant subset of a
locally convex space $(X,\tau )$ whose topology is determined by a family $Q$
of seminorms, and suppose that $\mu $ is an $S$-invariant Radon probability
measure on $(K,\mathrm{weak}).$ Then $K=\mathrm{supp}(\mu )$ is $\tau $%
-totally bounded.
\end{lemma}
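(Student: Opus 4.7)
The plan is to prove the two conclusions in sequence: use minimality for the identification $K=\supp(\mu)$, then combine fragmentability with the $S$-invariant measure and a pigeonhole-type argument for total boundedness.

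First I would show $K=\supp(\mu)$. The support of $\mu$ is a nonempty weakly closed subset of $K$. To check it is $S$-invariant, note that separate continuity makes each $\lambda_s$ weakly continuous on $K$. For $x\in\supp(\mu)$, $s\in S$, and any weakly open $V\ni sx$, the weakly open neighborhood $\lambda_s^{-1}(V)\cap K$ of $x$ has positive $\mu$-measure; by $S$-invariance this equals $\mu(V\cap K)$, so $sx\in\supp(\mu)$. Minimality of $K$ then forces $\supp(\mu)=K$.

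For the second conclusion, fix $p\in Q$ and $\varepsilon>0$; I will exhibit a finite covering of $K$ by closed $p$-balls of radius $2\varepsilon$. By Lemma \ref{Me} applied to $K$, choose a weakly open $U\subseteq X$ with $A:=U\cap K\neq\emptyset$ and $p$-$\diam(A)<\varepsilon$; since $\supp(\mu)=K$, $\delta:=\mu(A)>0$. The key objects are the weak closures $B_s:=\overline{\lambda_s(A)}^{\mathrm{weak}}$ for $s\in S$. Mazur's theorem applied to the $\tau$-closed convex sublevel sets of $p$ shows that $p$ is weakly lower semicontinuous; combined with nonexpansiveness of $\lambda_s$ this yields $p$-$\diam(B_s)<\varepsilon$. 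Because $B_s$ is weakly closed and $\lambda_s^{-1}(B_s)\supseteq A$, the $S$-invariance of $\mu$ gives $\mu(B_s)\ge\mu(A)=\delta$.

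By Zorn's lemma I would pick a maximal pairwise disjoint subfamily $\{B_{s_1},\dots,B_{s_M}\}$ of $\{B_s:s\in S\}$; since each has $\mu$-measure at least $\delta$ and together they fit inside $K$, necessarily $M\le 1/\delta$. Choosing $y_i\in B_{s_i}$, maximality forces every $B_s$ to meet some $B_{s_i}$, and the triangle inequality applied across two sets of $p$-diameter less than $\varepsilon$ gives $p(z-y_i)<2\varepsilon$ for every $z\in B_s$; hence $B_s\subseteq\{z\in X:p(z-y_i)\le 2\varepsilon\}$. Consequently $SA=\bigcup_{s\in S}\lambda_s(A)\subseteq\bigcup_{i=1}^{M}\{z:p(z-y_i)\le 2\varepsilon\}$. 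The right-hand side is a finite union of convex $\tau$-closed sets, weakly closed by Mazur. Since $SA$ is forward $S$-invariant and nonempty, $\overline{SA}^{\mathrm{weak}}$ is a nonempty weakly closed $S$-invariant subset of $K$, hence equals $K$ by minimality. Thus $K\subseteq\bigcup_{i=1}^{M}\{z:p(z-y_i)\le 2\varepsilon\}$, as required.

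The main subtlety I anticipate lies in the measure-theoretic bookkeeping: the images $\lambda_s(A)$ need not themselves be weakly Borel. Replacing them with the weak closures $B_s$ sidesteps this issue, since these are Borel and the inclusion $\lambda_s^{-1}(B_s)\supseteq A$ already yields $\mu(B_s)\ge\delta$ via $S$-invariance. The creative step is then recognizing that a maximal pairwise disjoint subfamily has cardinality at most $1/\delta$, and that maximality together with the weak closedness of convex $p$-balls (via Mazur) converts small-$p$-diameter fragments into a finite covering of $K$.
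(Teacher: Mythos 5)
Your proof is correct and follows essentially the same route as the paper: minimality identifies $K$ with $\mathrm{supp}(\mu)$, fragmentability (Lemma \ref{Me}) produces a positive-measure fragment of small $p$-diameter, nonexpansiveness plus $S$-invariance of $\mu$ gives the measure-theoretic pigeonhole bounding the number of disjoint translated fragments, and minimality plus weak lower semicontinuity of $p$ turns the resulting finite net into a covering of $K$. The only (harmless) packaging differences are that you transport the whole fragment $A$ and its weak closures $B_s$ rather than the orbit of a single point $x\in U\cap\mathrm{supp}(\mu)$, and you finish via weak closedness of the convex $p$-balls together with $\overline{SA}^{\mathrm{weak}}=K$ instead of the paper's subnet argument based on $K=\overline{\{sx:s\in S\}}^{\mathrm{weak}}$.
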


\begin{proof}
Set $K_{0}=\mathrm{supp}(\mu )$ and notice that $\mu (s^{-1}(K_{0}))=\mu
(K_{0})=1.$ Hence $K_{0}\subset s^{-1}(K_{0})$ since $s^{-1}(K_{0})$ is
weakly closed. Similarly,
\begin{equation*}
\mu (s(K_{0}))=\mu (s^{-1}(s(K_{0})))=\mu (K_{0})=1
\end{equation*}%
and consequently, $K_{0}\subset s(K_{0}).$ Thus $s(K_{0})=K_{0}$ for every $%
s\in S$ and, since $K$ is minimal, $K=K_{0}.$ Let $p\in Q$ and $\varepsilon
>0.$ It follows from Lemma \ref{Me} that $K=\mathrm{supp}(\mu )$ is $\tau $%
-fragmented and hence there exist a weakly open set $U$ in $X$ and $x\in
U\cap \mathrm{supp}(\mu )$ such that $p(x-y)<\varepsilon $ for every $y\in
U\cap \mathrm{supp}(\mu ).$ Thus%
\begin{equation*}
\mu (\{y\in K:p(x-y)<\varepsilon \})\geq \mu (U\cap \mathrm{supp}(\mu ))>0.
\end{equation*}%
(Note that $\{y\in K:p(x-y)\leq \varepsilon \}$ is weakly compact and hence $%
\{y\in K:p(x-y)<\varepsilon \}$ is weakly Borel). Now we follow partly the
argument of \cite[Lemma 1]{Ba}. Notice that by nonexpansivity,%
\begin{equation*}
\{y\in K:p(x-y)<\varepsilon \}\subset s^{-1}(\{y\in K:p(sx-y)<\varepsilon \})
\end{equation*}%
for each $s\in S$ and, since $\mu $ is $S$-invariant,%
\begin{equation*}
\mu (\{y\in K:p(sx-y)<\varepsilon \})\geq \mu (\{y\in K:p(x-y)<\varepsilon
\})>0.
\end{equation*}%
It follows that there exists only a finite number of disjoint sets of the
form $\{y\in K:p(sx-y)<\varepsilon \},s\in S,$ which means that there exists
a finite $2\varepsilon $-net $A=\{s_{1}x,s_{2}x,...,s_{k}x\}$ of $\{sx:s\in
S\}$ with respect to $p.$ From minimality of $K,$ $K=\overline{\{sx:s\in S\}}%
^{\mathrm{weak}}$ and hence, for every $y\in K,$ there exists a net $%
\{s_{\alpha }x\}$ that converges weakly to $y.$ For each $s_{\alpha }x$ take
$z_{\alpha }\in A$ such that
\begin{equation*}
p(s_{\alpha }x-z_{\alpha })<2\varepsilon .
\end{equation*}%
Since $A$ is finite, there is a subnet $\{z_{\varphi (\beta )}\}$ of $%
\{z_{\alpha }\}$ converging to some $z\in A.$ It follows from the weak lower
semicontinuity of $p$ that%
\begin{equation*}
p(y-z)=p(w\text{-}\lim s_{\varphi (\beta )}x-w\text{-}\lim z_{\varphi (\beta
)})\leq \liminf_{\alpha }p(s_{\varphi (\beta )}x-z_{\varphi (\beta )})\leq
2\varepsilon .
\end{equation*}%
Thus for every $\varepsilon >0$ and $p\in Q$ there exists a finite $%
\varepsilon $-net for $K$ and it means that $K$ is $\tau $-totally bounded.
\end{proof}

We can now give a proof of a fixed point theorem that combines Lemma \ref%
{local} and the arguments in \cite[Theorem 4.1]{La} (see also \cite[Theorem
5.3]{LaTa}). Denote by $C(K)$ the space of weakly continuous complex-valued
functions defined on a weakly compact set $K.$

\begin{theorem}
\label{Main}Let $(S,(K,\mathrm{weak}))$ be a $Q$-nonexpansive dynamical
system, where $K$ is a weakly compact convex subset of a locally convex
space $(X,\tau )$ whose topology is determined by a family $Q$ of seminorms
and let $Y$ be a closed linear subspace of $\ell ^{\infty }(S)$ containing
constants and invariant under translations. Suppose that the function $S\ni
s\rightarrow f_{y}(s)=f(sy)$ belongs to $Y$ for every $y\in K$ and every $%
f\in C(K).$ If $Y$ has a left invariant mean then $(S,(K,\mathrm{weak}))$
has a fixed point.
\end{theorem}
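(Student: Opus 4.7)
The plan is to follow the two-step template suggested by the placement of this theorem right after Lemma~\ref{local}: use the left invariant mean on $Y$ to build an $S$-invariant Radon probability measure on a minimal weakly closed $S$-invariant subset $K_1\subseteq K$, apply Lemma~\ref{local} to force $K_1$ to be $\tau$-totally bounded, and finally extract a common fixed point as in \cite[Thm 4.1]{La} and \cite[Thm 5.3]{LaTa}. The set $K_1$ is produced by Zorn's Lemma on the family of nonempty weakly closed $S$-invariant subsets of $K$ ordered by reverse inclusion; the chain condition is immediate since $K$ is weakly compact and each $\lambda_s$ is weakly continuous by the separate continuity of the action. Picking any $y_0\in K_1$, minimality forces $\overline{Sy_0}^{\mathrm{weak}}=K_1$.

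To manufacture the invariant measure, define
\[
\phi(f) \;=\; \mu\bigl(s\mapsto f(sy_0)\bigr),\qquad f\in C(K),
\]
where $\mu$ is the LIM on $Y$; the argument of $\mu$ lies in $Y$ by hypothesis. Then $\phi$ is a positive linear functional of norm one with $\phi(1)=1$, so the Riesz representation theorem delivers a Radon probability measure $\nu$ on $(K,\mathrm{weak})$ representing $\phi$. Its support lies in $\overline{Sy_0}^{\mathrm{weak}}=K_1$, because any $f$ vanishing on this set produces $s\mapsto f(sy_0)\equiv 0$. The identity $L_t(f_{y_0})(s)=f((ts)y_0)$ combined with the left invariance of $\mu$ gives $\nu(t^{-1}A)=\nu(A)$ for every Borel $A$, i.e.\ $\nu$ is $S$-invariant in the sense of Lemma~\ref{local}.

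Now Lemma~\ref{local} applied to $(S,K_1)$ yields that $K_1=\supp(\nu)$ is $\tau$-totally bounded. A standard argument using $\tau$-total boundedness together with the weak lower semicontinuity of each $p\in Q$ shows that the weak and $\tau$ topologies coincide on $K_1$, so $K_1$ is in fact $\tau$-compact. The equality $sK_1=K_1$ established inside the proof of Lemma~\ref{local}, combined with the classical fact that a nonexpansive surjection of a compact metric space must be an isometry, then forces each $\lambda_s|_{K_1}$ to be a bijective $Q$-isometry of $K_1$. From here, following the scheme of \cite[Thm 4.1]{La} and \cite[Thm 5.3]{LaTa}, I would extract a common $S$-fixed point in $K$ via an averaging argument over the compact isometry-group closure of $\{\lambda_s|_{K_1}\colon s\in S\}$ in $K_1^{K_1}$, combined with the convexity of $K$.

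The main obstacle I anticipate is precisely this last fixed-point extraction: since the $S$-action on $K$ is only $Q$-nonexpansive and not affine, one cannot simply take the barycenter of $\nu$ and invoke Markov--Kakutani. The technical heart is to transfer the averaging from the compact isometry-group closure on the minimal piece $K_1$ into the larger convex set $K$, and then to verify that the resulting point is genuinely $S$-fixed; this is exactly where Lau's original argument in \cite{La} does the delicate work.
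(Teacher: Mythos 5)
Your first two steps match the paper's: the minimal weakly compact $S$-invariant set, the functional $\Phi(f)=m(f_{y_0})$ built from the LIM, the Riesz measure, its $S$-invariance, and the application of Lemma~\ref{local} to get $\tau$-total boundedness are all exactly the intended route (your way of getting $\tau$-compactness -- weak compactness plus total boundedness plus lower semicontinuity of the seminorms -- is a legitimate substitute for the paper's device of passing to the completion of $X$). But the proof is not finished, and you say so yourself: the ``fixed-point extraction'' that you defer to an averaging argument over the isometry-group closure of $\{\lambda_s|_{K_1}\}$ is precisely the step that cannot be done by averaging, since the action is not affine and a barycenter of an orbit (or of $\nu$) has no reason to be fixed under a merely nonexpansive map. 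So as written there is a genuine gap at the heart of the theorem.

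The missing idea is a normal-structure (DeMarr-type) argument, and it requires a structural choice you skipped: one must first take a minimal weakly compact \emph{convex} $S$-invariant set $C\subset K$, and then a minimal weakly compact $S$-invariant (not necessarily convex) set $F\subset C$; your $K_1$ plays the role of $F$, but without the ambient minimal convex $C$ there is nothing to contradict. Once Lemma~\ref{local} makes $F$ $\tau$-compact, a seminorm version of DeMarr's lemma \cite[Lemma 1]{De} produces, for any $p\in Q$ with $\mathrm{diam}_p(F)=r>0$, a point $u\in\overline{\co}\,F\subset C$ with $r_0=\sup\{p(u-y):y\in F\}<r$. Then
\begin{equation*}
C_{0}=\{x\in C:\ p(x-y)\leq r_{0}\ \text{for all }y\in F\}
\end{equation*}
is a nonempty ($u\in C_0$), weakly compact, convex, \emph{proper} subset of $C$, and it is $S$-invariant because the action is $Q$-nonexpansive and $s(F)=F$ for all $s\in S$ (the equality $sF=F$ you already have from the measure argument). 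This contradicts the minimality of $C$ among convex invariant sets, so $F$ is a singleton, which is the desired common fixed point. Note also that compactness of $F$ is used exactly to make DeMarr's lemma applicable; no isometry-group or ``transfer of averaging into $K$'' is needed, and convexity of $K$ itself plays no role beyond supplying $C$.
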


\begin{proof}
It follows from Kuratowski-Zorn's lemma that there exists a minimal weakly
compact and convex subset $C$ of $K$ which is invariant under $S.$ Let $F$
be a minimal weakly compact subset of $C$ which is invariant under $S.$
Notice that $f_{y}\in Y$ for every $y\in F$ and $f\in C(F).$ Fix $y\in F$
and let $m$ be a left invariant mean on $Y.$ Define a positive functional $%
\Phi $ on $C(F)$ by%
\begin{equation*}
\Phi (f)=m(f_{y})
\end{equation*}%
for $f\in C(F).$ Let $_{t}f(x)=f(tx)$ for every $t\in S,\ x\in F$ and $f\in
C(F).$ Then $_{t}f:F\rightarrow \mathbb{C}$ is weakly continuous and $\Phi
(f)=\Phi (_{t}f).$ Let $\mu $ be the probability (weakly Borel) Radon
measure on $F$ corresponding to $\Phi .$ Then $\mu (A)=\mu (s^{-1}(A))$ for
every weakly Borel subset $A$ of $F$ and $s\in S.$ It follows from Lemma \ref%
{local} that $F=\mathrm{supp}(\mu )$ is $\tau $-totally bounded. We can
assume without loss of generality that $(X,\tau )$ is complete (otherwise,
consider the closure of $F$ in the completion of $X$). Thus $F$ is $\tau $%
-compact.

Suppose that $F$ is not a singleton. Then there is a seminorm $p\in Q$ such
that $r=\sup \{p(x-y):x,y\in F\}>0.$ By a counterpart of \cite[Lemma 1]{De}
(applied to the seminorm $p$ instead of a norm), there is $u\in \overline{%
\co%
}F\subset C$ such that $r_{0}=\sup \{p(u-y):y\in F\}<r.$ Let
\begin{equation*}
C_{0}=\{x\in C:p(x-y)\leq r_{0}\text{ for all }y\in F\}.
\end{equation*}%
Then $u\in C_{0}$ and $C_{0}$ is a weakly compact convex proper subset of $%
C. $ Since the system $(S,K)$ is $Q$-nonexpansive and $s(F)=F,s\in S,$ we
have $s(C_{0})\subset C_{0}$ for each $s\in S$ which contradicts the
minimality of $C_{0}.$ Thus $K$ consists of a single point $x$ and $sx=x$
for every $s\in S.$
\end{proof}

As alluded to in the introduction, Hsu \cite{Hsu} proved property $(G)$ for
a left reversible and discrete semigroup $S$, and Lau and Zhang \cite[%
Theorem 5.4]{LaZh1} generalized Hsu's result to left reversible, metrizable
semitopological semigroups by showing property $(G^{\ast })$. (Notice that
for discrete semigroups, properties $(G$) and $(G^{\ast })$ are equivalent).
Having Theorem \ref{Main}, we can extend the above theorems to left amenable
semitopological semigroups and also drop the separability assumption from
some results in \cite{LaZh1}, thus giving a full characterization for the
existence of a left invariant mean on $AP(S),WAP(S)$ or $WAP(S)\cap LUC(S)$,
respectively, in terms of the fixed point property of the semitopological
semigroup $S$ acting nonexpansively on a weakly compact convex subset of a
locally convex space.

\begin{theorem}
\label{LUC}Let $S$ be a semitopological semigroup. If $LUC(S)$ has a LIM,
then $S$ has property $(G^{\ast })$.
\end{theorem}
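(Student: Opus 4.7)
The plan is to apply Theorem~\ref{Main} with $Y = LUC(S)$. This subspace is closed in $\ell^{\infty}(S)$, translation-invariant, contains the constants, and has a LIM by hypothesis, so the only remaining hypothesis of Theorem~\ref{Main} to verify is that for every $y \in K$ and every $f \in C(K)$ (weakly continuous on $K$), the orbit function $f_y(s) = f(sy)$ lies in $LUC(S)$; equivalently, that $S \ni s \mapsto L_s f_y \in (C(S), \|\cdot\|_{\infty})$ is continuous.

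To check this, I would fix $y \in K$ and pass to the weakly compact orbit closure $K_y = \overline{\{sy : s \in S\}}^{\mathrm{weak}}$. Each translation $\lambda_t$ is weakly continuous, so $K_y$ is $S$-invariant. The key input is that property $(G^{\ast})$ assumes the dynamical system $(S,(K,\mathrm{weak}))$ to be continuous, i.e., the action $\pi : S \times K \to K$ is jointly weak-continuous. Composing with $f$ therefore produces a jointly continuous map $(s,z) \mapsto f(sz)$ on $S \times K_y$.

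The main step is then a standard compactness fact: a jointly continuous scalar-valued map on $S \times K_y$, with $K_y$ compact, induces a norm-continuous map $S \to C(K_y)$. Explicitly, for each $t_0 \in S$ and $\varepsilon > 0$ there is a neighbourhood $U$ of $t_0$ in $S$ with $\sup_{z \in K_y} |f(sz) - f(t_0 z)| < \varepsilon$ for every $s \in U$. Since $ty \in K_y$ for every $t \in S$, specializing $z = ty$ yields $\|L_s f_y - L_{t_0} f_y\|_{\infty} \leq \varepsilon$, so $f_y \in LUC(S)$ as required, and Theorem~\ref{Main} then delivers a fixed point in $K$, establishing $(G^{\ast})$.

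The only step requiring genuine care is the compactness argument in the previous paragraph: it uses the joint (not merely separate) continuity of the action—which is precisely why $(G^{\ast})$ is formulated with the continuity hypothesis—together with the weak compactness of the orbit closure $K_y$, in order to upgrade the easy pointwise continuity of $s \mapsto f(sy)$ to uniform control over all translates $t \in S$ simultaneously. This is the one point where the proof would break down for property $(G)$, since without joint continuity one only obtains continuity of the individual orbit functions, not norm continuity of $s \mapsto L_s f_y$.
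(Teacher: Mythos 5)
Your proposal is correct and follows the paper's proof: both reduce Theorem \ref{LUC} to Theorem \ref{Main} with $Y=LUC(S)$, the only task being to check $f_y\in LUC(S)$. The paper cites \cite[Lemma 5.1]{LaTa} for that step, whereas you prove it directly by the standard compactness argument (joint weak continuity of $\pi$ plus weak compactness of the orbit closure upgrades pointwise to uniform continuity of $s\mapsto L_sf_y$), which is a valid, self-contained verification of the same fact.
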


\begin{proof}
Let $K$ be a weakly compact convex subset of a locally convex space whose
topology is determined by a family $Q$ of seminorms. If $(S,(K,\mathrm{weak}%
))$ is continuous in weak topology, then by definition, the action $\pi
:S\times K\rightarrow K$ is jointly weakly continuous and it follows from
\cite[Lemma 5.1]{LaTa} that the function $f_{y}(s)=f(sy),s\in S,$ belongs to
$LUC(S)$ for every $f\in C(K)$ and $y\in K$. Since the action is also $Q$%
-nonexpansive and $LUC(S)$ has a left invariant mean, the result follows
from Theorem \ref{Main} specialized to $Y=LUC(S)$.
\end{proof}

Theorem \ref{LUC} extends also \cite[Corollary 3.4]{Sa} in two aspects: from
strongly amenable to amenable semigroups and from weakly compact convex sets
in Banach spaces to locally convex spaces.

For property $(G)$ we have the following result which extends (the right
part of) \cite[Theorem 3.1]{ADN} from Banach spaces to locally convex spaces.

\begin{theorem}
\label{LMC}If $LMC(S)$ has a LIM, then $S$ has property $(G)$.
\end{theorem}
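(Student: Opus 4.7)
The plan is to apply Theorem \ref{Main} with $Y = LMC(S)$. Since $LMC(S)$ is closed in $\ell^{\infty}(S)$, translation-invariant, contains the constants, and by hypothesis carries a left invariant mean, the only nontrivial step is to verify the functional hypothesis of Theorem \ref{Main}: for every $y \in K$ and every $f \in C(K)$ (weakly continuous on the weakly compact set $K$), the function $f_{y}(s) = f(sy)$ belongs to $LMC(S)$. Once this is in place, Theorem \ref{Main} directly yields a fixed point of the system, which is exactly property $(G)$.

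The verification $f_{y}\in LMC(S)$ is the main obstacle. Continuity of $f_{y}$ on $S$ is immediate from separate weak continuity of the action, so $f_{y}\in C(S)$. To upgrade this to $LMC(S)$, I would invoke the enveloping semigroup $E = E(S,(K,\mathrm{weak}))$. As a closed subspace of $K^{K}$ equipped with the pointwise weak topology, $E$ is a compact right-topological semigroup, and the assignment $\tau(s)=\lambda_{s}$ is a continuous homomorphism of $S$ into $E$ with dense range (continuity being separate weak continuity, read off on each coordinate). Consequently $(E,\tau)$ is a right-topological semigroup compactification of $S$, so the universal property of the $LMC$-compactification $S^{LMC}$ supplies a continuous homomorphism $\pi\colon S^{LMC}\to E$ with $\pi\circ\iota = \tau$, where $\iota\colon S\to S^{LMC}$ is the canonical map.

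Next, the evaluation $\eta\mapsto\eta y$ is continuous from $E$ to $(K,\mathrm{weak})$ since it is a coordinate projection in $K^{K}$, so $g(\eta)=f(\eta y)$ defines a continuous function $g\colon E\to\mathbb{C}$ with $g(\tau(s)) = f_{y}(s)$. Then $g\circ\pi$ is a continuous extension of $f_{y}$ to $S^{LMC}$, which is precisely the condition characterising membership in $LMC(S)$. This completes the verification of the hypothesis of Theorem \ref{Main}, and the theorem delivers the desired fixed point. The remainder of the argument is routine, and the adaptation parallels the Banach-space version \cite[Theorem 3.1]{ADN}, with Theorem \ref{Main} absorbing the locally convex technicalities in a single step.
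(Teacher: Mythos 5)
Your proof is correct and has the same skeleton as the paper's: reduce everything to Theorem \ref{Main} with $Y=LMC(S)$, the only real work being the verification that $f_{y}\in LMC(S)$ for all $y\in K$ and weakly continuous $f$. Where you differ is in how that verification is carried out. The paper simply adapts Mitchell's direct argument (Theorem 3 of \cite{Mi}): given a multiplicative mean $\mu$ on $C(S)$, take a net $(t_{\alpha})$ of point evaluations converging weak$^{\ast}$ to $\mu$, pass to a subnet with $t_{\alpha}y\rightarrow z$ weakly in the weakly compact set $K$, and use separate weak continuity to get $\mu(L_{s}f_{y})=f(sz)$, which is continuous in $s$; this needs nothing beyond the compactness of $K$ itself. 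You instead pass through the enveloping semigroup $E(S,(K,\mathrm{weak}))$ viewed as a right topological semigroup compactification of $S$ and invoke the maximality (universal property) of $S^{LMC}$ together with the identification $LMC(S)=C(S^{LMC})\circ\iota$; this is a valid and standard packaging (Berglund--Junghenn--Milnes), and it makes transparent why the enveloping semigroup is the natural object here, at the cost of importing that nontrivial compactification machinery where the paper's route is elementary and self-contained. One point you should state explicitly: the universal property of $S^{LMC}$ applies only to compactifications $(\psi,X)$ with $\psi(S)$ contained in the topological center $\Lambda(X)$, so you must check that left multiplication $\eta\mapsto\lambda_{s}\circ\eta$ is continuous on $E$ for each $s\in S$; this is immediate because each translation $\lambda_{s}$ is weakly continuous on $K$, but without it the appeal to $S^{LMC}$ is not licensed. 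With that remark added, your argument is complete and yields exactly the hypothesis needed in Theorem \ref{Main}.
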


\begin{proof}
Given a $Q$-nonexpansive system $(S,(K,\mathrm{weak})),$ the action\ $\pi
:S\times K\rightarrow K$ is weakly separately continuous and the same
reasoning as in the proof of Theorem 3 in \cite{Mi} yields $f_{y}\in LMC(S)$
for every $f\in C(K)$ and $y\in K.$ Now the result follows from Theorem \ref%
{Main} specialized to $Y=LMC(S)$.
\end{proof}

It appears that $LMC(S)$ should be replaced by $WLUC(S)$ in Theorem \ref{LMC}
as in the case of affine actions (see \cite[Theorem 4]{Mi}) but it is out of
our reach for now. In fact, it seems to be an open problem whether $WLUC(S)$
is a proper subspace of $LMC(S).$

Since property $(G)$ implies $WAP(S)$ has a LIM, it follows from Hsu's
theorem that if $S$ is discrete and left reversible, then $WAP(S)$ has a
LIM. It improves an earlier result of Ryll-Nardzewski who proved the
existence of LIM on $WAP(G)$ when $G$ is a (discrete) group. Lau and Zhang
\cite[Theorem 3.4]{LaZh1} showed that if $S$ is a separable semitopological
semigroup, then $WAP(S)$ has a LIM iff $S$ has property $(F)$, and one of
the main questions in \cite{LaZh2} was about a similar characterization for
any semitopological semigroup. We answer this question in the affirmative.

\begin{theorem}
\label{WAP}$WAP(S)$ has a LIM if and only if $S$ has property $(F)$.
\end{theorem}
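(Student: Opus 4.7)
The equivalence has two directions. The direction ``$(F)\Rightarrow WAP(S)$ has a LIM'' is standard and does not rely on the fragmentability tools of Section~3. The plan is to equip the weak$^{\ast }$ compact convex set $M(WAP(S))$ of means, sitting inside $X=WAP(S)^{\ast }$, with the locally convex topology generated by the seminorms $p_{f}(\mu )=\sup \{|\mu (h)|:h\in H_{f}\}$, where $H_{f}$ is the weakly closed absolutely convex hull of the orbit $\{L_{t}f:t\in S\}$. Since $f\in WAP(S)$ the set $H_{f}$ is weakly compact and $L_{s}$-invariant, so $\mu \mapsto L_{s}^{\ast }\mu $ is $\{p_{f}\}$-nonexpansive; on the other hand the dual of $X$ in this topology is $WAP(S)$, so the weak topology of $X$ is the weak$^{\ast }$ topology, and $M(WAP(S))$ is weakly compact. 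The WAP character of the system then forces the enveloping semigroup to consist of weak$^{\ast }$-continuous maps (this is built into the identification with the WAP compactification of $S$). Property $(F)$ delivers a fixed mean, which is a LIM on $WAP(S)$.

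For the substantive converse ``$WAP(S)$ has a LIM $\Rightarrow (F)$'', the plan is to invoke Theorem \ref{Main} with $Y=WAP(S)$. Let $(S,(K,\mathrm{weak}))$ be a $Q$-nonexpansive system on a weakly compact convex $K$ such that $E:=E(S,(K,\mathrm{weak}))$ consists of weakly continuous maps, and let $y\in K$ and $f\in C(K)$. I need to verify that $f_{y}(s)=f(sy)$ lies in $WAP(S)$; the fixed point then follows from Theorem~\ref{Main}.

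The key construction is the map $\Phi :E\rightarrow C(S)$ given by $\Phi (\lambda )(s)=f(\lambda (sy))$. Each $\Phi (\lambda )$ is continuous on $S$: the orbit map $s\mapsto sy$ is continuous by separate continuity of the action, $\lambda $ is weakly continuous by the hypothesis on $E$, and $f$ is weakly continuous on $K$. The identity $\Phi (\lambda _{t})(s)=f(\lambda _{t}(sy))=f(tsy)=L_{t}f_{y}(s)$ embeds $\{L_{t}f_{y}:t\in S\}$ into $\Phi (E)$. Since $\Phi $ is continuous from $E$ (with product topology) to $C(S)$ (with the pointwise topology) and $E$ is compact, $\Phi (E)$ is pointwise compact in $C(S)$ and uniformly bounded by $\|f\|_{\infty }$. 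Grothendieck's characterization of weak relative compactness in $C(S)$ then gives that $\Phi (E)$, hence also $\{L_{t}f_{y}:t\in S\}$, is weakly relatively compact, so $f_{y}\in WAP(S)$.

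The principal obstacle lies exactly at this last step: without the weak continuity of the maps $\lambda \in E$, the image $\Phi (E)$ need not sit inside $C(S)$, and Grothendieck's criterion would fail to deliver weak relative compactness there. This is precisely where the enveloping semigroup restriction of property $(F)$ is indispensable.
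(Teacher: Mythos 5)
Your overall strategy coincides with the paper's: the substantive direction is handled by checking that $f_{y}\in WAP(S)$ and then invoking Theorem \ref{Main} with $Y=WAP(S)$, while the direction $(F)\Rightarrow$ LIM is the standard argument on the set of means of $WAP(S)$ (the paper simply defers it to Theorem 3.4 of Lau--Zhang, and your sketch of it, including the Mackey--Arens identification of the weak topology with the weak$^{\ast}$ topology and the weak$^{\ast}$ continuity of the enveloping semigroup coming from weak almost periodicity, is consistent with that). The gap is in your verification that $f_{y}\in WAP(S)$, which the paper obtains by citing \cite[Lemma 3.2]{LaZh1}. You define $\Phi:E\rightarrow C(S)$, $\Phi(\lambda)(s)=f(\lambda(sy))$, correctly observe that $\Phi(E)$ is a uniformly bounded, pointwise compact set of continuous functions containing the orbit $\{L_{t}f_{y}:t\in S\}$, and then conclude weak relative compactness in $C(S)$ ``by Grothendieck's characterization.'' But Grothendieck's theorem in that form (bounded $+$ pointwise relatively compact $\Rightarrow$ weakly relatively compact) is valid for $C(\Omega )$ with $\Omega $ \emph{compact} (more generally countably compact), and $S$ here is an arbitrary semitopological semigroup. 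For non-compact $S$ the implication fails: take $S=\mathbb{N}$ discrete and $f_{n}=\mathbf{1}_{\{n,n+1,\dots\}}\in \ell ^{\infty }(\mathbb{N})=C_{b}(\mathbb{N})$; the set $\{f_{n}\}\cup \{0\}$ is uniformly bounded and pointwise compact, yet $\{f_{n}\}$ is not relatively weakly compact in $\ell ^{\infty }(\mathbb{N})$ (any functional extending a free ultrafilter limit sends every $f_{n}$ to $1$, while the only possible pointwise limit is $0$). So pointwise compactness of $\Phi (E)$ on $S$ alone cannot deliver $f_{y}\in WAP(S)$.

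The fix is to use Grothendieck's \emph{double limit} criterion (equivalently, to test pointwise compactness on the spectrum of $\ell^{\infty}(S)$ rather than on $S$): for sequences $(t_{n}),(s_{m})$ in $S$ one passes to subnets with $s_{m}y\rightarrow z$ weakly in $K$ and $\lambda _{t_{n}}\rightarrow \lambda \in E$ pointwise, and then both iterated limits of $f(t_{n}s_{m}y)$ equal $f(\lambda (z))$ --- one order using convergence in $E$, the other order using precisely the hypothesis that $\lambda $ is weakly continuous, together with weak compactness of $K$. This limit interchange is the actual content of the cited Lemma 3.2 of Lau--Zhang; your construction identifies the right objects ($E$, $\Phi$, the weak continuity hypothesis of $(F)$) but the final compactness step as written does not go through and must be replaced by this double-limit argument (or by the citation the paper uses).
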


\begin{proof}
Assume that $WAP(S)$ has a LIM and a system $(S,(K,\mathrm{weak}))$ is $Q$%
-nonexpansive, where $K$ is a weakly compact convex subset of a locally
convex space, and such that the enveloping semigroup $E(S,K)$ consists of
weakly continuous functions. It follows from \cite[Lemma 3.2]{LaZh1} that $%
f_{y}\in WAP(S)$ for every $f\in C(K)$ and $y\in K.$ Thus the assumptions of
Theorem \ref{Main} are satisfied with $Y=WAP(S)$ and we obtain a fixed point
of $(S,K)$. The reverse implication follows in the same way as in \cite[%
Theorem 3.4]{LaZh1}.
\end{proof}

As a by-product, combining Theorem \ref{LUC} with Theorem \ref{WAP} we have $%
(G^{\ast })\not\Rightarrow (G)$ for, otherwise, amenability of $LUC(S)$
would imply amenability of $WAP(S)$ which is in general not the case (see
\cite[Example 5.5]{LaZh1}). Our next result drops the separability
assumption from \cite[Theorem 5.1]{LaZh1} and thus provides a full
characterization of a semigroup $S$ that has a left invariant mean on $%
WAP(S)\cap LUC(S)$ in terms of a fixed point property for nonexpansive
mappings.

\begin{theorem}
\label{WAPLUC}$WAP(S)\cap LUC(S)$ has a LIM if and only if $S$ has property $%
(F^{\ast })$.
\end{theorem}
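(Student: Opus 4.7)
The plan is to mirror the proof schema already established for Theorems \ref{LUC} and \ref{WAP}, applying Theorem \ref{Main} with the subspace $Y = WAP(S) \cap LUC(S)$, and to obtain the converse by the same construction used in \cite[Theorem 5.1]{LaZh1}.

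For the forward direction, I would start with a $Q$-nonexpansive and (jointly) continuous dynamical system $(S,(K,\mathrm{weak}))$ with $K$ weakly compact convex in a locally convex space, whose enveloping semigroup $E(S,(K,\mathrm{weak}))$ consists of weakly continuous functions. Fix $y\in K$ and $f\in C(K)$; I have to show that $f_{y}(s)=f(sy)$ belongs simultaneously to $WAP(S)$ and $LUC(S)$. The membership $f_{y}\in LUC(S)$ follows exactly as in Theorem \ref{LUC}: joint weak continuity of the action, together with \cite[Lemma 5.1]{LaTa}, gives norm continuity of $s\mapsto L_{s}f_{y}$. The membership $f_{y}\in WAP(S)$ follows exactly as in Theorem \ref{WAP}: the hypothesis that every element of $E(S,(K,\mathrm{weak}))$ is weakly continuous allows one to invoke \cite[Lemma 3.2]{LaZh1}. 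With both properties established, $f_{y}\in WAP(S)\cap LUC(S)$, and so Theorem \ref{Main} applied to $Y=WAP(S)\cap LUC(S)$ (which is a closed, translation-invariant linear subspace of $\ell^{\infty}(S)$ containing constants and, by assumption, admits a left invariant mean) produces a fixed point of $(S,K)$.

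For the converse, I would follow the construction of \cite[Theorem 5.1]{LaZh1} without change. One considers the canonical affine action of $S$ on the weak$^{\ast}$ compact convex set $M$ of means of $WAP(S)\cap LUC(S)$, given by $(sm)(f)=m(L_{s}f)$. This action is affine and in particular nonexpansive with respect to the family of seminorms induced by evaluation at the functions in $WAP(S)\cap LUC(S)$; the $LUC$ factor delivers joint (weak-weak) continuity of the action, while the $WAP$ factor ensures that the enveloping semigroup of $(S,M)$ consists of weakly continuous functions. Property $(F^{\ast })$ therefore supplies a fixed mean, which is the desired left invariant mean.

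The main obstacle is conceptual rather than computational: one has to be confident that the two continuity-type conditions extracted from the hypothesis on $(S,K)$ in property $(F^{\ast })$, namely joint continuity of the action and weak continuity of every element of the enveloping semigroup, really do translate into the two defining membership conditions for the space $WAP(S)\cap LUC(S)$ in a single argument. Once these translations are identified (one via \cite[Lemma 5.1]{LaTa}, the other via \cite[Lemma 3.2]{LaZh1}), Theorem \ref{Main} does the rest, and the converse is a verbatim adaptation of \cite[Theorem 5.1]{LaZh1}.
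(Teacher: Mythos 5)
Your proposal matches the paper's proof: the forward direction is exactly the paper's argument (joint weak continuity plus \cite[Lemma 5.1]{LaTa} gives $f_{y}\in LUC(S)$, the enveloping-semigroup hypothesis plus \cite[Lemma 3.2]{LaZh1} gives $f_{y}\in WAP(S)$, and Theorem \ref{Main} with $Y=WAP(S)\cap LUC(S)$ yields the fixed point), and the converse is, in the paper as in your proposal, simply delegated to \cite[Theorem 5.1]{LaZh1}. One caveat on your sketch of that converse: the dual action on the means of $WAP(S)\cap LUC(S)$ is affine but it is \emph{not} nonexpansive for the evaluation seminorms, since $p_{f}(sm-sm')=\left\vert m(L_{s}f)-m'(L_{s}f)\right\vert =p_{L_{s}f}(m-m')$, which need not be bounded by $p_{f}(m-m')$; the argument in \cite{LaZh1} runs instead through orbit sets of the form $\overline{\co}\{L_{s}f:s\in S\}$, on which left translation is sup-norm nonexpansive, so rely on the citation rather than on the means-action as described.
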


\begin{proof}
Assume that $WAP(S)\cap LUC(S)$ has a LIM, $K$ is a weakly compact convex
subset of a locally convex space and a system $(S,(K,\mathrm{weak}))$ is $Q$%
-nonexpansive, weakly continuous and the enveloping semigroup $E(S,K)$
consists of weakly continuous functions. Thus the action $\pi :S\times
K\rightarrow K$ is jointly weakly continuous and hence $f_{y}\in LUC(S)$ for
every $f\in C(K)$ and $y\in K.$ Furthermore, as in the proof of Theorem \ref%
{WAP}, $f_{y}\in WAP(S).$ Applying Theorem \ref{Main} with $Y=WAP(S)\cap
LUC(S)$ we get a fixed point of $S$ in $K.$ The reverse implication follows
in the same way as in \cite[Theorem 5.1]{LaZh1}.
\end{proof}

Finally, we consider the space of almost periodic functions $AP(S).$ Lau
\cite[Theorem 4.1]{La} characterized amenability of $AP(S)$ in terms of a
fixed point property for nonexpansive mappings acting on a compact convex
subset of a locally convex space. He showed that $AP(S)$ has a LIM if and
only if $S$ has property $(D)$. In \cite[Theorem 3.9]{LaZh1}, the authors
characterized amenability of $AP(S)$ in terms of a fixed point property on
weakly compact convex sets and proved that $AP(S)$ has a LIM if and only if $%
S$ has property $(E^{\prime })$. Furthermore, they proved that $AP(S)$ has a
LIM if and only if $S$ has property $(E)$ provided $S$ is separable (see
\cite[Theorem 3.6]{LaZh1}). We have such a characterization for any
semigroup.

\begin{theorem}
\label{AP}$AP(S)$ has a LIM if and only if $S$ has property $(E)$.
\end{theorem}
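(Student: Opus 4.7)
The plan is to prove both implications. For the forward direction, assume $AP(S)$ has a left invariant mean and consider a $Q$-nonexpansive dynamical system $(S,(K,\mathrm{weak}))$, where $K$ is a weakly compact convex subset of a locally convex space and the translations $\lambda_s$ are weakly equicontinuous. By Theorem \ref{Main} applied with $Y=AP(S)$, it suffices to show that $f_y\in AP(S)$ for every $f\in C(K)$ and every $y\in K$. For the reverse direction, I will invoke Lau's theorem \cite[Theorem 4.1]{La}, which states that $AP(S)$ has a LIM if and only if $S$ has $(D)$, and then derive $(D)$ from $(E)$.

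The key step is the claim that $f_y\in AP(S)$ under weak equicontinuity. The plan is to factor the orbit of $f_y$ through the enveloping semigroup $E=E(S,(K,\mathrm{weak}))$. By weak equicontinuity of $\{\lambda_s\}$ and the Ascoli--Arzel\`a theorem, $E$ is compact in the topology of uniform weak convergence on $K$ and consists of weakly continuous self-maps of $K$. Since $f\in C(K)$ is weakly continuous on the weakly compact set $K$, it is uniformly weakly continuous there; consequently the map $T_f(\phi)=f\circ\phi$ is continuous from $E$ (with its uniform weak topology) into $C(K)$ endowed with the sup norm, and therefore $\{f\circ\lambda_s:s\in S\}$ is relatively norm-compact in $C(K)$. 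Composition with the continuous orbit map $\rho_y:S\to K$, $\rho_y(t)=ty$, is a contraction $C(K)\to\ell^\infty(S)$ that sends $f\circ\lambda_s$ to $L_s f_y$, so $\{L_s f_y:s\in S\}$ is relatively norm-compact in $\ell^\infty(S)$; that is, $f_y\in AP(S)$, and Theorem \ref{Main} yields a fixed point.

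For the reverse implication, suppose $S$ has $(E)$ and let $(S,(K,\tau))$ be a $Q$-nonexpansive system with $K$ a $\tau$-compact convex subset of a locally convex space $(X,\tau)$. Since the weak topology of $X$ is coarser than $\tau$, $K$ is weakly compact, and the identity $(K,\tau)\to(K,\mathrm{weak})$ is a continuous bijection from a compact space onto a Hausdorff space, hence a homeomorphism; in particular $K$ carries a single compatible uniformity. Because $Q$-nonexpansivity directly yields $\tau$-equicontinuity of $\{\lambda_s\}$, it follows that $\{\lambda_s\}$ is also weakly equicontinuous on $K$, and property $(E)$ supplies a fixed point, establishing $(D)$. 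The main technical hurdle is the enveloping-semigroup argument in the forward direction: one must handle equicontinuity and uniform continuity purely in the (generally non-metrizable) weak uniformity on $K$, without recourse to the original $\tau$-topology; once this is in place, the rest of both directions is essentially formal bookkeeping.
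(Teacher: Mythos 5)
Your argument is correct, and its skeleton in the forward direction coincides with the paper's: reduce everything to Theorem \ref{Main} with $Y=AP(S)$, the only point of substance being that $f_y\in AP(S)$ whenever $f\in C(K)$ and the translations are weakly equicontinuous. The difference is in how the two halves are supported. For that key membership the paper simply cites Lau's Lemma 3.1 of \cite{La}; you reprove it from scratch via the enveloping semigroup and the uniform-space Ascoli theorem (equicontinuity makes the pointwise closure of $\{\lambda_s\}$ compact for uniform convergence and consist of continuous maps, uniform continuity of $f$ on the compact space $(K,\mathrm{weak})$ makes $\phi\mapsto f\circ\phi$ norm-continuous, and composition with $\rho_y$ is a contraction carrying $f\circ\lambda_t$ to $L_tf_y$). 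This is exactly the content of Lau's lemma and your handling of the non-metrizable weak uniformity is fine, since a weakly compact set carries a unique compatible uniformity and Ascoli holds for general uniform spaces; the cost is length, the gain is self-containedness. For the reverse implication the paper invokes \cite[Theorem 3.9]{LaZh1} (that $AP(S)$ has a LIM iff $S$ has $(E')$, together with the trivial $(E)\Rightarrow(E')$), whereas you prove $(E)\Rightarrow(D)$ directly (on a $\tau$-compact set the weak and $\tau$ topologies, hence uniformities, coincide, and $Q$-nonexpansivity gives equicontinuity) and then quote Lau's 1973 theorem that $(D)$ is equivalent to $AP(S)$ having a LIM. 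Both routes lean on an external "fixed point property implies LIM" result; yours uses the older reference \cite{La} plus a short reduction that is in fact the known implication $(E)\Rightarrow(D)$ from the chain recalled in the introduction, the paper's uses the Lau--Zhang result and is shorter on the page. No gap either way.
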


\begin{proof}
Suppose that $AP(S)$ has a LIM, $K$ is a weakly compact convex subset of a
locally convex space and a system $(S,(K,\mathrm{weak}))$ is $Q$%
-nonexpansive and such that the translations $\lambda _{s}:K\ni x\rightarrow
sx\in K$ are equicontinuous in weak topology. Then $f_{y}\in AP(S)$ for
every $f\in C(K)$ and $y\in K$ by \cite[Lemma 3.1]{La}. Applying Theorem \ref%
{Main} with $Y=AP(S)$ we get a fixed point of $S$ in $K.$ The reverse
implication holds by \cite[Theorem 3.9]{LaZh1}.
\end{proof}

Thus it is shown that properties $(D),(E^{\prime })$ and $(E)$ are
equivalent that solves another open problem from \cite[p. 2542]{LaZh1} (see
also \cite[Problem 1]{LaZh4}).

One aspect of this program that has not been studied yet is its relation to
the Bruck retraction method developed in \cite{Br1, Br2}. We use the
following consequence of Bruck's theorem \cite[Theorem 3]{Br2} to get the
qualitative information about the structure of the set of fixed points of $%
(S,K).$

\begin{theorem}
\label{Bruck}Let $K$ be a compact Hausdorff topological space and $S$ a
(discrete) semigroup of mappings on $K$ (the separate continuity of the
operation is not required). Suppose that $S$ is compact in the product
topology of $K^{K}$ and each nonempty closed $S$-invariant subset of $K$
contains a fixed point of $S$. Then there exists in $S$ a retraction of $K$
onto $F(S)=\{x\in K:sx=x$ for every $s\in S\}.$
\end{theorem}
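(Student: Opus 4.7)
The plan is to produce the required retraction as a single element of a minimal closed left ideal of $S$, exploiting the fact that $S\subset K^K$, being compact in the product topology, is a compact right topological semigroup. To begin, I would note that for each $f\in S$ the right multiplication $\rho_f\colon S\to S$, $\rho_f(s)=sf$, is continuous in the product topology, since $(sf)(x)=s(f(x))$ is continuous in $s$ via evaluation at $x\in K$. Using compactness of $S$ and the finite intersection property on nested closed left ideals, a standard Zorn's lemma argument then yields a minimal non-empty closed left ideal $L$ of $S$.

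Next I would invoke the hypothesis on $K$. For each $x\in K$, the orbit $Lx$ is the image of the compact set $L$ under the continuous evaluation $f\mapsto f(x)$, hence compact and closed in $K$; and it is $S$-invariant because $s(Lx)=(sL)x\subset Lx$. By assumption $Lx$ meets $F(S)$, so
\[
A_x := \{f\in L : f(x)\in F(S)\}\neq\emptyset.
\]
For $f\in A_x$ and $s\in S$ one has $(sf)(x)=s(f(x))=f(x)\in F(S)$, so $A_x$ is a left ideal of $S$. Provided each $s\in S$ acts continuously on $K$, the set $F(S)=\bigcap_{s\in S}\{y\in K:sy=y\}$ is closed, and hence $A_x$ is closed in $L$. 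Minimality of $L$ then forces $A_x=L$ for every $x\in K$, which means that every $f\in L$ sends $K$ into $F(S)$.

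To conclude, I would pick any $r\in L$: then $r(K)\subset F(S)$, and for $x\in F(S)$ we have $r(x)=x$ because $r\in S$ fixes $F(S)$ pointwise, so $r$ is a retraction of $K$ onto $F(S)$ lying in $S$. The main obstacle in this plan is the closedness of $A_x$, which reduces to closedness of $F(S)$; this holds whenever the individual elements of $S$ act continuously on $K$, as is automatic in the nonexpansive setting in which the theorem is to be applied. The parenthetical remark in the statement about the non-required separate continuity of the operation is naturally read as excusing only the continuity of the orbit maps $s\mapsto sx$, which is in any case automatic from compactness of $S\subset K^K$.
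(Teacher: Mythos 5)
Your construction of a minimal closed left ideal $L$ (via continuity of right multiplications in $K^K$ and Zorn's lemma) and of the $S$-invariant compact orbit $Lx$ matches the paper's setup, but the step where you force $A_x=\{f\in L: f(x)\in F(S)\}$ to equal $L$ has a genuine gap. Minimality of $L$ applies only to \emph{closed} left ideals, and the closedness of $A_x$ rests on $F(S)$ being closed in $K$, which you justify by assuming each element of $S$ acts continuously on $K$. That is exactly what the theorem refuses to assume: $S$ is an arbitrary compact subsemigroup of $K^K$, and in the intended applications (Theorems \ref{q}, \ref{LUC star}, \ref{M}) the theorem is applied to $\hat S$, the semigroup of \emph{all} nonexpansive self-maps of $K$ fixing $F(S)$, where $K$ carries the weak or weak$^{\ast}$ topology; nonexpansiveness with respect to the seminorms/norm does not give weak or weak$^{\ast}$ continuity, so your claim that continuity is ``automatic in the nonexpansive setting'' is false. (In those applications $F(S)=F(\hat S)$ does happen to be closed because the original translations $\lambda_s$ are continuous, but the theorem as stated cannot lean on that, and you cannot repair the argument by passing to $\overline{A_x}$ either, since left multiplications $t\mapsto st$ in $K^K$ are continuous only when $s$ is continuous on $K$, so the closure of a left ideal need not be a left ideal here.)

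The paper's proof (following Bruck) avoids this entirely: having found, for a fixed $x$, a fixed point $u\in Jx$ of $S$, it considers $I=\{s\in J: sx=u\}$, the preimage of the \emph{singleton} $\{u\}$ under the continuous evaluation $s\mapsto sx$. Since $K$ is Hausdorff, $I$ is closed with no continuity assumption on the maps in $S$, and it is a left ideal because $u$ is a fixed point; minimality gives $I=J$, so all elements of $J$ agree at every $x$, whence $J=\{e\}$ is a single map, and $e$ is the required retraction onto $F(S)$. Replacing your $A_x$ by this singleton-preimage ideal is the missing idea; the rest of your outline is sound.
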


Note that a retraction here is a mapping $r:K\rightarrow F(S)$ such that $%
r\circ r=r$ (the continuity of $r$ in the topology of $K$ is not required).
We sketch the proof for the convenience of the reader.

\begin{proof}
Following \cite[Theorem 3]{Br2}, we will construct a one-element left ideal $%
\{e\}$ of $S.$ By Kuratowski-Zorn's lemma there exists a minimal left ideal $%
J$ of $S$ that is compact in the product topology of $K^{K}$. If $x\in K$
then $Jx=\{sx:s\in J\}$ is compact as the image of the compact set $J$ under
the continuous projection $S\ni s\rightarrow sx\in K$ and $S$-invariant
since $sJ\in J$ for each $s\in S.$ By assumption, there is $u\in Jx$ such
that $su=u$ for $s\in S.$ Define $I=\{s\in J:sx=u\}.$ Then $\emptyset \neq
I\subset J$ is a left ideal of $S$ and compact in product topology. From
minimality of $J,$ $I=J$, that is, $sx=u$ for every $s\in J.$ Since $x$ is
arbitrary, $J$ consists of a single element $e:K\rightarrow K.$ Thus $se=e$
for every $s\in S$ and consequently $ex\in F(S)$ for every $x\in K.$
Moreover, $ex=x$ for $x\in F(S)$ since $e\in S.$ It shows that $e$ is a
retraction of $K$ onto $F(S).$
\end{proof}

The following theorem is a qualitative complement to the results of this
section.

\begin{theorem}
\label{q}Let $S$ be a semitopological semigroup that satisfies one of
properties from $(D)$ to $(G^{\ast })$. Then the set of fixed points of $%
(S,K)$ is a $Q$-nonexpansive retract of $K$.
\end{theorem}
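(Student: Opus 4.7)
The plan is to derive Theorem~\ref{q} from Bruck's Theorem~\ref{Bruck} applied to the enveloping semigroup of $(S,K)$. Form $\bar{S}$, the closure of $\{\lambda_{s}:s\in S\}$ in $K^{K}$ equipped with the product topology, where each factor $K$ carries the weak topology in cases $(E)$--$(G^{\ast})$ and the topology $\tau$ in case $(D)$. As a closed subset of the compact space $K^{K}$, the set $\bar{S}$ is compact. Each seminorm $p\in Q$ is convex and $\tau$-continuous, hence weakly lower semicontinuous, so for any $t=\lim_{\alpha}\lambda_{s_{\alpha}}\in\bar{S}$,
$$p(tx-ty)\leq\liminf_{\alpha}p(s_{\alpha}x-s_{\alpha}y)\leq p(x-y),$$
which shows that $\bar{S}$ is a compact semigroup of $Q$-nonexpansive self-maps of $K$ with $F(\bar{S})=F(S)$.

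The core task is to verify the hypothesis of Theorem~\ref{Bruck}: every nonempty closed $\bar{S}$-invariant $G\subseteq K$ contains a fixed point of $S$. Given such a $G$, Zorn's lemma furnishes a minimal weakly compact $S$-invariant subset $G_{0}\subseteq G$, and it suffices to show $G_{0}$ is a singleton. I follow the scheme of Theorem~\ref{Main}: the property in force supplies a LIM on the appropriate translation-invariant subspace $Y$ (one of $AP(S)$, $WAP(S)$, $WAP(S)\cap LUC(S)$, etc., via the characterizations in Theorems~\ref{LUC}--\ref{AP}), and the continuity or enveloping-semigroup condition built into the property descends to the restricted system $(S,G_{0})$, ensuring $f_{y}\in Y$ (where $f_{y}(s)=f(sy)$) for every $f\in C(G_{0})$ and $y\in G_{0}$. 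This produces an $S$-invariant Radon probability measure $\mu$ on $G_{0}$, and Lemma~\ref{local} forces $G_{0}=\mathrm{supp}(\mu)$ to be $\tau$-totally bounded. If $G_{0}$ were not a singleton, the De~Marr-type step of Theorem~\ref{Main} would produce $p\in Q$ and $u\in\overline{\co}(G_{0})^{\mathrm{weak}}\subseteq K$ with $r_{0}=\sup_{y\in G_{0}}p(u-y)<r=\sup_{y,z\in G_{0}}p(y-z)$, and then the set $C_{0}=\{z\in K:p(z-y)\leq r_{0}\text{ for every }y\in G_{0}\}$ would be a nonempty, proper, weakly compact, convex, $S$-invariant subset of $K$ (using $Q$-nonexpansivity together with $s(G_{0})=G_{0}$, the latter by minimality of $G_{0}$). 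A suitable minimality argument then delivers the desired contradiction and forces $G_{0}$ to be a singleton.

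Once this verification is in hand, Theorem~\ref{Bruck} yields $r\in\bar{S}$ which is a retraction of $K$ onto $F(\bar{S})=F(S)$; being an element of $\bar{S}$, the map $r$ is automatically $Q$-nonexpansive, completing the proof.

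The main obstacle is precisely the last minimality step. In the proof of Theorem~\ref{Main} the minimality used was that of a weakly compact convex $S$-invariant set $C$ containing $G_{0}$ (there denoted $F$), and $C_{0}\subsetneq C$ contradicted the minimality of $C$ at once. In the present setting no minimal convex container is naturally provided for an arbitrary minimal $S$-invariant $G_{0}$: if $C$ is chosen to be minimal among weakly compact convex $S$-invariant sets containing $G_{0}$, the shrunken $C_{0}$ fails to contain $G_{0}$ (because $r_{0}<r$) and so the class is not forced to collapse. The refinement must come either from iterating the shrinking construction until the $p$-diameters of the nested invariant sets collapse, or from exploiting the compact right topological semigroup structure of $\bar{S}$ (minimal left ideals, idempotents, and the group structure on $eJ$) to extract directly the one-element left ideal that Theorem~\ref{Bruck} ultimately requires. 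This refinement is the technical heart of the argument.
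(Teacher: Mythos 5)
Your reduction to Theorem \ref{Bruck} is the right general idea, but applying it to the enveloping semigroup $\bar{S}$ cannot work, and the step you flag as ``the technical heart'' is not a missing refinement: the statement you need is false for $\bar{S}$. For $\bar{S}$, the hypothesis of Theorem \ref{Bruck} amounts to: every nonempty weakly closed $S$-invariant subset of $K$ contains a common fixed point of $S$. Take $S$ to be the (discrete, commutative, hence left amenable) group of rotations of the plane acting on the closed unit disc $K\subset \mathbb{R}^{2}$; all of the properties $(D)$--$(G^{\ast })$ hold, the action is nonexpansive, yet the unit circle is a closed $\bar{S}$-invariant set containing no fixed point. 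Your measure/fragmentability scheme does go through there (Haar measure; the circle is norm compact), but the minimal invariant set is simply not a singleton, so no iteration of the DeMarr shrinking and no analysis of minimal left ideals of $\bar{S}$ can close the gap. Worse, the retraction of $K$ onto $F(S)=\{0\}$ does not belong to $\bar{S}$ at all, since $\bar{S}$ consists of rotations; the retraction cannot be extracted from the enveloping semigroup.

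The paper's proof replaces $\bar{S}$ by the much larger semigroup $\hat{S}=\{T:K\rightarrow K\mid T$ is $Q$-nonexpansive and $F(S)\subset F(T)\}$, which is closed in $K^{K}$ (by the same lower semicontinuity computation you carried out) and, crucially, convex. Convexity supplies exactly what your argument lacks: if $K_{0}$ is a closed $\hat{S}$-invariant set and $x\in K_{0}$, then $\hat{K}=\{Tx:T\in \hat{S}\}$ is a compact \emph{convex} $S$-invariant subset of $K_{0}$ (the image of the compact convex set $\hat{S}$ under the affine evaluation map $T\mapsto Tx$), so the assumed fixed point property of $S$ can be applied as a black box to the restricted system on $\hat{K}$ (the equicontinuity, enveloping-semigroup or joint-continuity hypotheses pass to invariant subsets; there is no need to rerun Lemma \ref{local} or Theorem \ref{Main}), and any fixed point of $S$ is automatically a common fixed point of $\hat{S}$ by the very definition of $\hat{S}$. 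Note also that $\hat{S}$-invariance is genuinely stronger than $S$-invariance (in the rotation example the circle is not $\hat{S}$-invariant), which is why Bruck's hypothesis becomes verifiable for $\hat{S}$ although it fails for $\bar{S}$. Theorem \ref{Bruck} then yields a retraction lying in $\hat{S}$, hence $Q$-nonexpansive.
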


\begin{proof}
Suppose that $S$ satisfies one of these properties. Put $\hat{S}%
=\{T:K\rightarrow K\mid T$ is $Q$-nonexpansive and $F(S)\subset F(T)\}.$
Notice that $K^{K}$ is compact in the topology of pointwise convergence when
$K$ is given the weak topology (or $\tau $-topology in case of property $(D)$%
). Furthermore, $\hat{S}\subset K^{K}$ is closed in this topology since
\begin{equation*}
p(w\text{-}\lim T_{\alpha }x-w\text{-}\lim T_{\alpha }y)\leq \liminf_{\alpha
}p(T_{\alpha }x-T_{\alpha }y)\leq p(x-y)
\end{equation*}%
for every $x,y\in K,p\in Q$ and a convergent net $\{T_{\alpha }\}\subset
\hat{S}.$ Thus $\hat{S}$ is compact in this topology and $F(S)=F(\hat{S}).$
Let $K_{0}$ be a closed $\hat{S}$-invariant subset of $K$ and select $x\in
K_{0}.$ Then $\hat{K}=\{Tx:T\in \hat{S}\}$ is a compact convex $\hat{S}$%
-invariant subset of $K_{0}$. Since $S$ satisfies one of properties from $%
(D) $ to $(G^{\ast })$, there is a fixed point of $\hat{S}$ in $\hat{K}%
\subset K_{0}.$ From Theorem \ref{Bruck} there is in $\hat{S}$ a retraction
of $K$ onto $F(\hat{S})=F(S).$ This completes the proof since every element
in $\hat{S}$ is $Q$-nonexpansive.
\end{proof}

\section{Semigroup actions on Banach spaces}

An old problem in fixed point theory that dates back to the 1970s (see \cite%
{LaZh2} for a discrete case and \cite{La2} for a general case), posed by A.
T.-M. Lau, concerns a counterpart of the well-known Day-Mitchell
\textquotedblleft affine\textquotedblright\ characterization of amenable
semigroups: does a semitopological semigroup $S$ have the fixed point
property:\medskip

\begin{enumerate}
\item[$(F_{\ast })$:] \textit{Every nonexpansive and continuous dynamical
system }$(S,(K,\mathrm{weak}^{\ast }))$\textit{, where }$K$\textit{\ is a
weak}$^{\ast }$\textit{\ compact convex subset of a dual Banach space, has a
fixed point}
\end{enumerate}

\medskip \noindent if $LUC(S)$ has a LIM? Partial solutions to this problem
were obtained for separable weak$^{\ast }$\ compact convex sets\textit{\ }in
\cite{LaZh2} and for commutative semigroups in \cite{BoWi}. We refer the
reader to \cite{LaZh4} for a discussion and further references. In this
section we extend the techniques from Section 3 to propose a new approach to
Lau's problem.

Suppose $K$ is a weak$^{\ast }$ compact subset of a dual Banach space and
denote by $\mathcal{B}(K,\mathrm{weak}^{\ast })$ the sigma-algebra of weak$%
^{\ast }$ Borel subsets of $K.$ In general, the sigma-algebra $\mathcal{B}%
(K,\left\Vert \cdot \right\Vert )$ of norm Borel sets may be larger than $%
\mathcal{B}(K,\mathrm{weak}^{\ast })$ but the closed balls in $K$ are weak$%
^{\ast }$ compact and hence are weak$^{\ast }$ Borel. Let $\mu $ be a
probability measure on the sigma-algebra $\mathcal{B}_{0}(K,\left\Vert \cdot
\right\Vert )\subset \mathcal{B}(K,\mathrm{weak}^{\ast })$ generated by the
family of balls in $K$ and define
\begin{equation*}
\mathrm{supp}_{\left\Vert \cdot \right\Vert }(\mu )=\{x\in K:\mu (\{y\in
K:\left\Vert x-y\right\Vert <\varepsilon \})>0\text{ for each }\varepsilon
>0\}.
\end{equation*}%
Recall from Section 2 that by $(S,(K,\left\Vert \cdot \right\Vert ))$ we
mean a dynamical system, where $\pi :S\times K\rightarrow K$ is separately
continuous and $K$ is considered with the norm-topology.

\begin{lemma}
\label{banach}Let $(S,(K,\left\Vert \cdot \right\Vert ))$ be a nonexpansive
dynamical system, where $K$ is a minimal weak$^{\ast }$ compact $S$%
-invariant subset of a dual Banach space and suppose that $\mu $ is a
probability $S$-invariant measure on $\mathcal{B}_{0}(K,\left\Vert \cdot
\right\Vert ).$ If $\mathrm{supp}_{\left\Vert \cdot \right\Vert }(\mu )\neq
\emptyset $ then $K$ is norm-compact.
\end{lemma}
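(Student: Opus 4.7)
The plan is to mimic the proof of Lemma \ref{local}, replacing the weak topology by the weak$^{\ast}$ topology and each seminorm $p$ by the norm, while noting that the fragmentability step is unnecessary here because the hypothesis $\mathrm{supp}_{\|\cdot\|}(\mu)\neq\emptyset$ already supplies a good starting point.

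First I would fix $\varepsilon>0$ and choose any $x_{0}\in\mathrm{supp}_{\|\cdot\|}(\mu)$, so that the norm-ball $B=\{y\in K:\|x_{0}-y\|<\varepsilon\}$ has strictly positive measure. Nonexpansivity gives the inclusion $B\subset s^{-1}(\{y\in K:\|sx_{0}-y\|<\varepsilon\})$ for each $s\in S$, and then $S$-invariance of $\mu$ yields
\[
\mu(\{y\in K:\|sx_{0}-y\|<\varepsilon\})\geq\mu(B)>0\qquad\text{for every }s\in S.
\]
Because $\mu$ is a probability measure only finitely many of these balls can be pairwise disjoint; a maximal disjoint subfamily therefore produces $s_{1},\ldots,s_{k}\in S$ such that $A=\{s_{1}x_{0},\ldots,s_{k}x_{0}\}$ is a finite $2\varepsilon$-net in norm for the orbit $\{sx_{0}:s\in S\}$.

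Next I would invoke minimality, writing $K=\overline{\{sx_{0}:s\in S\}}^{\,w^{\ast}}$, and copy the closing argument of Lemma \ref{local}. Given $y\in K$, pick a net $s_{\alpha}x_{0}\to y$ in weak$^{\ast}$, choose $z_{\alpha}\in A$ with $\|s_{\alpha}x_{0}-z_{\alpha}\|<2\varepsilon$, and extract a subnet on which $z_{\varphi(\beta)}$ stabilises at some $z\in A$. Weak$^{\ast}$ lower semicontinuity of the norm then gives
\[
\|y-z\|\leq\liminf_{\beta}\|s_{\varphi(\beta)}x_{0}-z_{\varphi(\beta)}\|\leq 2\varepsilon,
\]
so $A$ is a $2\varepsilon$-net for $K$ in norm. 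Since $\varepsilon$ was arbitrary, $K$ is norm-totally bounded. Being weak$^{\ast}$ compact, $K$ is weak$^{\ast}$ closed and hence also norm-closed in the ambient dual Banach space (norm convergence implies weak$^{\ast}$ convergence), so completeness upgrades norm-total boundedness to norm-compactness.

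The main obstacle is the measurability bookkeeping hidden in the $S$-invariance step: since $\mu$ lives only on the sigma-algebra $\mathcal{B}_{0}(K,\|\cdot\|)$ generated by balls, one must verify that the preimage $\lambda_{s}^{-1}(\{y\in K:\|sx_{0}-y\|<\varepsilon\})$ lies in $\mathcal{B}_{0}$ and not merely in the possibly larger norm-Borel sigma-algebra. The nonexpansivity of $\lambda_{s}$ renders this preimage norm-open, and expressing it as a countable union of balls in $K$ should suffice, but this check is the one genuinely new ingredient beyond the pattern established in Section 3. Once it is cleared up, the argument is parallel to the locally convex case and no further fragmentability-type input is required, because the nonempty norm-support takes over the role Megrelishvili's lemma played before.
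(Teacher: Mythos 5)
Your argument is essentially the paper's own proof: the positive measure of a small ball around $x_{0}\in\mathrm{supp}_{\left\Vert \cdot \right\Vert }(\mu)$, nonexpansivity plus $S$-invariance forcing every ball $\{y\in K:\Vert sx_{0}-y\Vert<\varepsilon\}$ to have measure at least $\mu(B)>0$, hence a finite $2\varepsilon$-net for the orbit, and then minimality together with weak$^{\ast}$ lower semicontinuity of the norm identifying $K$ with the norm closure of the orbit, which is norm-compact by completeness. The measurability point you flag is not treated in the paper either (its notion of $S$-invariance on $\mathcal{B}_{0}(K,\left\Vert \cdot \right\Vert)$ is applied directly to the balls $\{y\in K:\Vert sx_{0}-y\Vert<\varepsilon\}$ and their preimages, and in the actual applications $\mu$ is a Radon measure on the weak$^{\ast}$ Borel sets with weak$^{\ast}$ continuous translations, where no problem arises), so this is a matter of how the hypothesis is read rather than a gap in your proof.
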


\begin{proof}
By assumption, there is $x\in K$ such that $\mu (\{y\in K:\left\Vert
x-y\right\Vert <\varepsilon \})>0$ for each $\varepsilon >0.$ Thus, by
nonexpansivity,%
\begin{equation*}
\mu (s^{-1}(\{y\in K:\left\Vert sx-y\right\Vert <\varepsilon \}))\geq \mu
(\{y\in K:\left\Vert x-y\right\Vert <\varepsilon \})>0
\end{equation*}%
and, since $\mu $ is $S$-invariant,%
\begin{equation*}
\mu (\{y\in K:\left\Vert sx-y\right\Vert <\varepsilon \})=\mu (s^{-1}(\{y\in
K:\left\Vert sx-y\right\Vert <\varepsilon \}))>0
\end{equation*}%
for each $s\in S$ and $\varepsilon >0.$ It follows that there exists a $%
2\varepsilon $-net $A=\{s_{1}x,s_{2}x,...,s_{k}x\}$ of $\{sx:s\in S\}$ for
every $\varepsilon >0.$ Hence $\{sx:s\in S\}$ is norm-totally bounded. From
minimality of $K,$ $K=\overline{\{sx:s\in S\}}^{\mathrm{weak}^{\ast }}=%
\overline{\{sx:s\in S\}}^{\mathrm{norm}}$ is norm-compact.
\end{proof}

Notice that in particular, Lemma \ref{banach} holds if $\mu $ is a
probability $S$-invariant Radon measure on $\mathcal{B}(K,\left\Vert \cdot
\right\Vert )$ that may be of independent interest.

There is a large class of sets for which $\mathrm{supp}_{\left\Vert \cdot
\right\Vert }(\mu )\neq \emptyset $ for every Radon measure $\mu $ on $%
\mathcal{B}(K,\mathrm{weak}^{\ast }).$ Recall that a convex closed subset $C$
of a Banach space $X$ has the Radon-Nikod\'{y}m property (RNP for short) if
for any measure space $(\Omega ,\mathcal{F},\mu )$ and an $X$-valued measure
$F:\mathcal{F}\rightarrow X$ such that $\{F(B)/\mu (B):B\in \mathcal{F},\mu
(B)>0\}\subset C,$ there is a Bochner integrable function $\varphi :\Omega
\rightarrow C$ such that $F(B)=\int\nolimits_{B}\varphi \ d\mu $ for each $%
B\in \mathcal{F}$.

Suppose that $K$ is a weak$^{\ast }$ compact convex subset with the RNP of a
dual Banach space and $\mu $ a Radon probability measure on $\mathcal{B}(K,%
\mathrm{weak}^{\ast }).$ Let $\mathrm{supp}(\mu )$ denote the support of $%
\mu $ with respect to $\mathcal{B}(K,\mathrm{weak}^{\ast })$. By the results
of Michael, Namioka, Phelps and Stegall, the identity map $\mathrm{id}:(%
\mathrm{supp}(\mu ),\mathrm{weak}^{\ast })\rightarrow (\mathrm{supp}(\mu ),%
\mathrm{norm})$ has a point of continuity $x$ (see, e.g., \cite[Theorem
4.2.13]{Bo}). It follows that for every $\varepsilon >0$ there is a weak$%
^{\ast }$ open neighbourhood $U$ of $x$ such that $\left\Vert x-y\right\Vert
<\varepsilon $ for each $y\in U\cap \mathrm{supp}(\mu ).$ Hence
\begin{equation*}
\mu (\{y\in K:\left\Vert x-y\right\Vert <\varepsilon \})\geq \mu (U\cap
\mathrm{supp}(\mu ))>0
\end{equation*}%
and therefore $x\in \mathrm{supp}_{\left\Vert \cdot \right\Vert }(\mu ).$
Thus we obtain a counterpart of Theorem \ref{Main}.

\begin{theorem}
\label{Main star}Let $(S,(K,\mathrm{weak}^{\ast }))$ be a nonexpansive
dynamical system, where $K$ is a weak$^{\ast }$ compact convex subset with
the RNP of a dual Banach space and let $Y$ be a closed linear subspace of $%
\ell ^{\infty }(S)$ containing constants and invariant under translations.
Suppose that the function $S\ni s\rightarrow f_{y}(s)=f(sy)$ belongs to $Y$
for every $y\in K$ and weak$^{\ast }$ continuous function $f:K\rightarrow
\mathbb{C}.$ If $Y$ has a left invariant mean then $(S,(K,\mathrm{weak}%
^{\ast }))$ has a fixed point.
\end{theorem}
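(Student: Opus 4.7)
The plan is to imitate the proof of Theorem~\ref{Main}, substituting Lemma~\ref{banach} for Lemma~\ref{local} and using the RNP at the step where fragmentability was previously invoked. By Kuratowski--Zorn I would first extract a minimal weak$^{\ast}$ compact convex $S$-invariant set $C\subset K$, and then a minimal weak$^{\ast}$ compact $S$-invariant subset $F\subset C$. Fixing $y\in F$ and a left invariant mean $m$ on $Y$, I set $\Phi(f)=m(f_{y})$ for $f\in C((F,\mathrm{weak}^{\ast}))$; by hypothesis $f_{y}\in Y$, so $\Phi$ is a well-defined positive linear functional with $\Phi(1)=1$, and the Riesz representation theorem on the compact Hausdorff space $(F,\mathrm{weak}^{\ast})$ produces a weak$^{\ast}$ Radon probability measure $\mu$ representing $\Phi$. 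Separate continuity of the action makes each translation $\lambda_{t}:F\to F$ weak$^{\ast}$ continuous, so $f\circ\lambda_{t}\in C((F,\mathrm{weak}^{\ast}))$ and $(f\circ\lambda_{t})_{y}=L_{t}f_{y}$; combined with the left invariance of $m$ this yields $\Phi(f\circ\lambda_{t})=\Phi(f)$, equivalently $\mu(t^{-1}A)=\mu(A)$ for every weak$^{\ast}$ Borel $A$ and every $t\in S$.

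Next I would verify the hypotheses of Lemma~\ref{banach} and apply it to $F$. Restricting $\mu$ to the sub-$\sigma$-algebra $\mathcal{B}_{0}(F,\left\Vert\cdot\right\Vert)\subset\mathcal{B}(F,\mathrm{weak}^{\ast})$ yields an $S$-invariant probability on the ball-generated $\sigma$-algebra. The Radon--Nikod\'ym property passes from $K$ to the weak$^{\ast}$ closed subset $F$, so by the Michael--Namioka--Phelps--Stegall argument recalled immediately before the statement there is a point $x\in\mathrm{supp}(\mu)$ of weak$^{\ast}$-to-norm continuity of the identity on $\mathrm{supp}(\mu)$, and this $x$ then lies in $\mathrm{supp}_{\left\Vert\cdot\right\Vert}(\mu)$. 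The $S$-invariance of $\mu$ together with minimality of $F$ forces $s(F)=F$ for every $s\in S$, exactly as in the proof of Lemma~\ref{local} (using that $\lambda_{s}$ is weak$^{\ast}$ continuous, so $s(F)$ is weak$^{\ast}$ Borel). Lemma~\ref{banach} then gives that $F$ is norm-compact.

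To conclude I would reproduce the normal-structure endgame of Theorem~\ref{Main}. Assume towards a contradiction that $r:=\diam F>0$. Since $F$ is norm-compact, Mazur's theorem makes $\overline{\co}F$ norm-compact and convex, and $\overline{\co}F\subset C$; DeMarr's lemma (the norm case of the result cited from \cite{De}) produces $u\in\overline{\co}F\subset C$ with $r_{0}:=\sup\{\left\Vert u-y\right\Vert:y\in F\}<r$. The set
\[
C_{0}=\{x\in C:\left\Vert x-y\right\Vert\leq r_{0}\text{ for all }y\in F\}
\]
is then a nonempty, proper, weak$^{\ast}$ compact and convex subset of $C$ containing $u$, and nonexpansivity combined with $s(F)=F$ forces $s(C_{0})\subset C_{0}$ for every $s\in S$, contradicting the minimality of $C$. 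Hence $F=\{x\}$ and $sx=x$ for every $s\in S$.

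The step I expect to be the main obstacle is the simultaneous juggling of two topologies: the measure-theoretic machinery lives in the weak$^{\ast}$ Borel world, while nonexpansivity, the support condition, and the normal-structure estimate are all norm quantities. The only bridge between them is the point-of-continuity theorem for RNP sets; this is both the reason the Radon--Nikod\'ym hypothesis enters the statement and the precise spot where Lau's original problem for general weak$^{\ast}$ compact convex $K$ remains open.
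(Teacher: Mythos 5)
Your proof is correct and follows essentially the same route as the paper's: minimal sets $C\supset F$, the $S$-invariant Radon measure induced by the left invariant mean via Riesz representation, the RNP point-of-continuity argument giving $\mathrm{supp}_{\left\Vert \cdot \right\Vert }(\mu )\neq \emptyset$, Lemma \ref{banach} to get norm-compactness of $F$, and the DeMarr/Chebyshev-set argument contradicting minimality. The only differences are cosmetic: you spell out the endgame that the paper delegates to the proof of Theorem \ref{Main}, and you correctly locate the final contradiction in the minimality of $C$ (where the paper's text has a small slip).
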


\begin{proof}
Let $C$ be a minimal weak$^{\ast }$ compact convex $S$-invariant subset of $%
K $ and $F$ a minimal weak$^{\ast }$ compact $S$-invariant subset of $C.$ As
in the proof of Theorem \ref{Main}, choose a left invariant mean on $Y$, fix
$y\in F$ and set $\Phi (f)=m(f_{y})$ for $f\in C(F)$, the space of weak$%
^{\ast }$ continuous complex-valued functions on $F$. Then $\mu (A)=\mu
(s^{-1}(A))$ for every weak$^{\ast }$ Borel subset $A$ of $F$ and $s\in S$,
where $\mu $ is the probability (weak$^{\ast }$ Borel) Radon measure on $F$
corresponding to $\Phi .$ Since $F$ is a weak$^{\ast }$ compact subset of
the set $K$ with the RNP, $\mathrm{supp}_{\left\Vert \cdot \right\Vert }(\mu
)\neq \emptyset $ and it follows from Lemma \ref{banach} that $F$ is
norm-compact. Furthermore, from minimality, $F=\mathrm{supp}(\mu )$ and
hence $s(F)=F$ for every $s\in S.$ As in the proof of Theorem \ref{Main}, $%
K=F$ consists of a single point $x$ and thus $sx=x$ for every $s\in S.$
\end{proof}

In particular, we obtain a partial solution to Lau's problem.

\begin{theorem}
\label{LUC star}Let $S$ be a semitopological semigroup and $(S,(K,\mathrm{%
weak}^{\ast }))$ a nonexpansive continuous dynamical system, where $K$ is a
weak$^{\ast }$ compact convex subset with the RNP of a dual Banach space. If
$LUC(S)$ has a LIM, then $S$ has a fixed point in $K$ and the set $F(S)$ of
fixed points is a nonexpansive retract of $K$.
\end{theorem}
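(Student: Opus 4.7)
The proof naturally decomposes into two parts: establishing the existence of a fixed point, and then upgrading $F(S)$ to a nonexpansive retract of $K$.

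For existence, I apply Theorem \ref{Main star} with $Y=LUC(S)$. The hypothesis that $LUC(S)$ has a LIM is given, so it suffices to verify that $f_{y}(s)=f(sy)$ lies in $LUC(S)$ whenever $f:K\to\mathbb{C}$ is weak$^{\ast}$ continuous and $y\in K$. Since the system $(S,(K,\mathrm{weak}^{\ast}))$ is by assumption jointly weak$^{\ast}$ continuous, the reasoning used for Theorem \ref{LUC} transfers with \cite[Lemma 5.1]{LaTa} applied in the weak$^{\ast}$ topology: joint continuity of $\pi$ together with the weak$^{\ast}$ compactness of $K$ delivers norm-continuity of $s\mapsto L_{s}f_{y}$. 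Theorem \ref{Main star} then produces a fixed point, so $F(S)\neq\emptyset$.

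For the retract, I follow the template of Theorem \ref{q}, adapted to weak$^{\ast}$. Set
\begin{equation*}
\hat{S}=\{T:K\to K\mid T\text{ is nonexpansive and }F(S)\subset F(T)\},
\end{equation*}
viewed as a subset of $K^{K}$ equipped with the product of weak$^{\ast}$ topologies. Then $\hat{S}$ is a semigroup (composition of nonexpansive maps is nonexpansive and preserves $F(S)$), closed under convex combinations in $K$, contains $S$, and satisfies $F(\hat{S})=F(S)$. Closedness of $\hat{S}$ in $K^{K}$ follows from the weak$^{\ast}$ lower semicontinuity of the dual norm: for any pointwise weak$^{\ast}$ convergent net $T_{\alpha}\to T$ with $T_{\alpha}\in\hat{S}$,
\begin{equation*}
\|Tx-Ty\|\leq\liminf_{\alpha}\|T_{\alpha}x-T_{\alpha}y\|\leq\|x-y\|,
\end{equation*}
and trivially $Tp=p$ for $p\in F(S)$. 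Hence $\hat{S}$ is compact in $K^{K}$.

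To invoke Theorem \ref{Bruck}, I must check that every nonempty weak$^{\ast}$ closed $\hat{S}$-invariant subset $K_{0}\subset K$ contains a fixed point of $\hat{S}$. Pick $x\in K_{0}$ and let $\hat{K}=\{Tx:T\in\hat{S}\}$. As a continuous image of the weak$^{\ast}$ compact $\hat{S}$, $\hat{K}$ is weak$^{\ast}$ compact; it is convex since $\hat{S}$ is closed under convex combinations, and it is $\hat{S}$-invariant (by composition) and contained in $K_{0}$. As a weak$^{\ast}$ closed convex subset of $K$, $\hat{K}$ inherits the RNP, and $S\subset\hat{S}$ acts on $\hat{K}$ nonexpansively and jointly weak$^{\ast}$ continuously. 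Applying the existence part already established to this restricted system yields a fixed point of $S$ in $\hat{K}\subset K_{0}$; since $F(S)=F(\hat{S})$, this is also a fixed point of $\hat{S}$. Theorem \ref{Bruck} then furnishes a retraction in $\hat{S}$ of $K$ onto $F(\hat{S})=F(S)$, which is nonexpansive by membership in $\hat{S}$.

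The most delicate step I anticipate is the weak$^{\ast}$ analogue of \cite[Lemma 5.1]{LaTa}: one should verify carefully that joint weak$^{\ast}$ continuity of $\pi$ combined with weak$^{\ast}$ compactness of $K$ gives norm-continuity of $s\mapsto L_{s}f_{y}$ for $f$ weak$^{\ast}$ continuous. Once this is in place, the rest is a routine transfer of Theorem \ref{q}'s construction, with pointwise convergence interpreted in the weak$^{\ast}$ topology throughout and inheritance of the RNP by weak$^{\ast}$ closed convex subsets supplying what was previously supplied by weak compactness.
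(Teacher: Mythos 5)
Your proposal is correct and follows essentially the same route as the paper: existence via Theorem \ref{Main star} with $Y=LUC(S)$ (using \cite[Lemma 5.1]{LaTa} for $f_{y}\in LUC(S)$), and the retraction by forming $\hat{S}$, checking its compactness in the weak$^{\ast}$ pointwise topology via lower semicontinuity of the norm, applying the existence part to $\hat{K}=\{Tx:T\in\hat{S}\}$ inside each invariant set, and invoking Theorem \ref{Bruck}. Your extra remarks (convexity of $\hat{K}$ from convex combinations in $\hat{S}$, inheritance of the RNP) only make explicit what the paper leaves implicit.
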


\begin{proof}
It follows from \cite[Lemma 5.1]{LaTa} that $f_{y}\in LUC(S)$ for every weak$%
^{\ast }$ continuous function $f:K\rightarrow \mathbb{C}$ and $y\in K.$ By
Theorem \ref{Main star}, there is a fixed point of $S$ in $K.$

Put $\hat{S}=\{T:K\rightarrow K\mid T$ is nonexpansive and $F(S)\subset
F(T)\}$ and notice that $\hat{S}\subset K^{K}$ is closed in the topology of
weak$^{\ast }$ pointwise convergence since
\begin{equation*}
\left\Vert w^{\ast }\text{-}\lim T_{\alpha }x-w^{\ast }\text{-}\lim
T_{\alpha }y\right\Vert \leq \liminf_{\alpha }\left\Vert T_{\alpha
}x-T_{\alpha }y\right\Vert \leq \left\Vert x-y\right\Vert
\end{equation*}%
for every $x,y\in K$ and every weak$^{\ast }$ convergent net $\{T_{\alpha
}\}\subset \hat{S}.$ Thus $\hat{S}$ is compact in this topology. Let $K_{0}$
be a weak$^{\ast }$ closed $\hat{S}$-invariant subset of $K.$ Choose $x\in
K_{0}$ and notice that $\hat{K}=\{Tx:T\in \hat{S}\}$ is a weak$^{\ast }$
compact convex $\hat{S}$-invariant subset of $K_{0}$. By the first part of
this theorem there is a fixed point of $S$ in $\hat{K}\subset K_{0}.$ But $%
F(S)=F(\hat{S})$ and it follows from Theorem \ref{Bruck} that there exists
in $\hat{S}$ a retraction of $K$ onto $F(S).$
\end{proof}

As a consequence, Theorem \ref{LUC star} holds for any weak$^{\ast }$
compact convex subset of the dual of an Asplund space, in particular for any
norm separable weak$^{\ast }$ compact convex subset of a dual Banach space
(see, e.g., \cite[Theorem 2]{Na4}). We leave to the reader to formulate and
prove appropriate versions of the above theorem when $AP(S),WAP(S),WAP(S)%
\cap LUC(S)$ or $LMC(S)$ have a LIM, respectively.

There is another class of sets, related to the recent Bader-Gelander-Monod
theorem (see \cite[Theorem A]{BGM}), for which Lau's problem has an
affirmative solution. Recall that a Banach space $X$ is said to be $L$%
-embedded if its bidual $X^{\ast \ast }$ can be decomposed as $X^{\ast \ast
}=X\oplus _{1}X_{s}$ for some $X_{s}\subset X^{\ast \ast }$ (with the $\ell
^{1}$-norm). A Banach space $X$ is $M$-embedded if $X$ is an $M$-ideal in
its bidual $X^{\ast \ast }.$ It is known that if $X$ is $M$-embedded, then $%
X^{\ast }$ is $L$-embedded and the converse is not true in general (see \cite%
{HWW}). Examples of $L$-embedded Banach spaces include all $L_{1}$ spaces,
preduals of von Neumann algebras and the Hardy space $H_{1}$. In turn, $%
c_{0}(\Gamma )$ and $K(H)$, the Banach space of all compact operators on a
Hilbert space $H$, are examples of $M$-embedded spaces. The following notion
was introduced in \cite{LaZh2}.

\begin{definition}
Let $C$ be a nonempty subset of a Banach space $X$ and denote by $\overline{C%
}^{\mathrm{weak}^{\ast }}$ the closure of $C$ in $X^{\ast \ast }$ in the weak%
$^{\ast }$ topology of $X^{\ast \ast }$. We say that $C$ is $L$-embedded if
there is a subspace $X_{s}$ of $X^{\ast \ast }$ such that $X\oplus
_{1}X_{s}\subset X^{\ast \ast }$ and $\overline{C}^{\mathrm{weak}^{\ast
}}\subset C\oplus _{1}X_{s}.$
\end{definition}

It was proved in \cite{LaZh2} that every $L$-embedded set is weakly closed.
Moreover, a Banach space is $L$-embedded iff its unit ball is $L$-embedded.
Notice that a weakly compact subset $C$ of any Banach space $X$ is $L$%
-embedded since $\overline{C}^{\mathrm{weak}^{\ast }}=C.$

If $A,C$ are subsets of a Banach space $X$ with $A$ bounded, we define the
Chebyshev radius of $A$ in $C$ by%
\begin{equation*}
r_{C}(A)=\inf_{x\in C}\sup_{y\in A}\left\Vert x-y\right\Vert
\end{equation*}%
and the Chebyshev center of $A$ in $C$ by%
\begin{equation*}
E_{C}(A)=\{x\in C:\sup_{y\in A}\left\Vert x-y\right\Vert =r_{C}(A)\}.
\end{equation*}

\begin{lemma}[{see {\protect\cite[Lemma 3.3]{LaZh2}}}]
\label{embed}Let $C$ be an $L$-embedded subset of a Banach space $X$ and $A$
a bounded subset of $X$. Then the Chebyshev center $E_{C}(A)$ is weakly
compact.
\end{lemma}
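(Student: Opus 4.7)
The plan is to combine the $\ell^{1}$-decomposition $X \oplus_{1} X_{s}$ with weak$^{\ast}$ lower semicontinuity of the bidual norm to show that every weak$^{\ast}$ cluster point of $E_{C}(A)$ in $X^{\ast\ast}$ already lies in $E_{C}(A)$; weak compactness will then follow from Banach--Alaoglu together with the fact that the restriction of the weak$^{\ast}$ topology of $X^{\ast\ast}$ to $X$ agrees with the weak topology of $X$. We may assume $E_{C}(A)$ is nonempty, since otherwise there is nothing to prove.

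First I would observe that $E_{C}(A)$ is norm-bounded: fixing any $y_{0} \in A$, every $x \in E_{C}(A)$ satisfies $\Vert x\Vert \le \Vert y_{0}\Vert + r_{C}(A)$. Consequently the weak$^{\ast}$ closure $\overline{E_{C}(A)}^{\mathrm{weak}^{\ast}}$ taken in $X^{\ast\ast}$ is weak$^{\ast}$ compact by Banach--Alaoglu. Since $E_{C}(A) \subset C$, this closure is contained in $\overline{C}^{\mathrm{weak}^{\ast}} \subset C \oplus_{1} X_{s}$, so every $z \in \overline{E_{C}(A)}^{\mathrm{weak}^{\ast}}$ admits a unique decomposition $z = c + s$ with $c \in C \subset X$ and $s \in X_{s}$.

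Next, take a net $\{x_{\alpha}\} \subset E_{C}(A)$ with $x_{\alpha} \to z$ in the weak$^{\ast}$ topology of $X^{\ast\ast}$. For each $y \in A$, viewed in $X^{\ast\ast}$ via the canonical embedding, one has $x_{\alpha} - y \to z - y = (c - y) + s$ weak$^{\ast}$. By weak$^{\ast}$ lower semicontinuity of the bidual norm,
\begin{equation*}
\Vert z - y\Vert \le \liminf_{\alpha} \Vert x_{\alpha} - y\Vert \le r_{C}(A),
\end{equation*}
while the $\ell^{1}$-decomposition, since $c - y \in X$ and $s \in X_{s}$, gives $\Vert z - y\Vert = \Vert c - y\Vert + \Vert s\Vert$. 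Taking the supremum over $y \in A$ yields
\begin{equation*}
\Vert s\Vert + \sup_{y \in A}\Vert c - y\Vert \le r_{C}(A).
\end{equation*}
But $c \in C$ forces $\sup_{y \in A}\Vert c - y\Vert \ge r_{C}(A)$ by the very definition of $r_{C}(A)$, so necessarily $\Vert s\Vert = 0$ and $\sup_{y \in A}\Vert c - y\Vert = r_{C}(A)$; hence $z = c \in E_{C}(A)$. Thus $E_{C}(A)$ coincides with its own weak$^{\ast}$ closure in $X^{\ast\ast}$, so it is weak$^{\ast}$ compact, and since $E_{C}(A) \subset X$ this is the same as weak compactness in $X$.

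The only real subtlety is that norm-additivity $\Vert c - y\Vert + \Vert s\Vert$ must be invoked simultaneously for every $y \in A$, so that the supremum passes cleanly through the decomposition and the minimality of the Chebyshev radius forces both $s = 0$ and $c \in E_{C}(A)$ at the same time. Modulo this bookkeeping, the argument is routine: it uses only the definition of $L$-embedding, weak$^{\ast}$ lower semicontinuity of the norm, and Banach--Alaoglu.
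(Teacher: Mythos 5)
Your proof is correct. The paper does not reprove this lemma but cites \cite[Lemma 3.3]{LaZh2}, and your argument is essentially the same as the one used there: pass to the weak$^{\ast}$ closure in $X^{\ast\ast}$, decompose a cluster point as $c+s$ via the $L$-embedding, and use weak$^{\ast}$ lower semicontinuity of the bidual norm together with $\ell^{1}$-additivity to force $s=0$ and $c\in E_{C}(A)$, so that $E_{C}(A)$ is bounded and weak$^{\ast}$ closed, hence weakly compact.
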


If we combine Theorem \ref{LUC} and Lemma \ref{embed} we have the following
complement of \cite[Theorem 3.11]{LaZh2}, where a similar statement was
proved for metrizable left reversible semitopological semigroups.

\begin{theorem}
\label{L-em}Let $C$ be a bounded convex $L$-embedded subset of a Banach
space $X$ and let $(S,(C,\mathrm{weak}))$ be a nonexpansive and continuous
dynamical system. If $LUC(S)$ has a LIM and $C$ contains a bounded subset $A$
such that $s(A)=A$ for all $s\in S$, then there is a fixed point of $S$ in $%
E_{C}(A).$
\end{theorem}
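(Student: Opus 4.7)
The plan is to reduce the problem to Theorem~\ref{LUC} applied to the restricted subsystem $(S,(E_{C}(A),\mathrm{weak}))$. To make this reduction work, I must verify that $E_{C}(A)$ is a nonempty weakly compact convex $S$-invariant subset of $C$ on which the action remains nonexpansive and jointly weakly continuous; once this is in hand, property $(G^{\ast})$ supplied by Theorem~\ref{LUC} immediately yields a fixed point.

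Weak compactness of $E_{C}(A)$ is furnished directly by Lemma~\ref{embed}, and nonemptiness is automatic since $A$ is bounded (so $r_{C}(A)<\infty$) and $C$ is $L$-embedded. Convexity is routine: $x\mapsto \sup_{y\in A}\|x-y\|$ is a supremum of affine functions, hence convex, so $E_{C}(A)$ is the intersection of the convex set $C$ with the sublevel set $\{x:\sup_{y\in A}\|x-y\|\leq r_{C}(A)\}$. Nonexpansivity and joint weak continuity of the restricted action are inherited trivially from those of the action on $C$.

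The central step, and the one where the hypothesis $s(A)=A$ (not merely $s(A)\subset A$) is essential, is $S$-invariance of $E_{C}(A)$. Fix $x\in E_{C}(A)$ and $s\in S$. Using surjectivity of $s\vert_{A}$, each $y\in A$ can be written as $y=sy'$ with $y'\in A$, and nonexpansivity then gives
\[
\|sx-y\|=\|sx-sy'\|\leq\|x-y'\|\leq r_{C}(A).
\]
Taking the supremum over $y\in A$ yields $\sup_{y\in A}\|sx-y\|\leq r_{C}(A)$; since moreover $sx\in C$ by $S$-invariance of $C$ and $r_{C}(A)$ is the infimum of such suprema, we conclude $sx\in E_{C}(A)$.

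With these ingredients in place, Theorem~\ref{LUC} applied to the weakly compact convex set $E_{C}(A)$ produces a fixed point of $S$ in $E_{C}(A)$. The only real obstacle is the $S$-invariance step, which forces the argument to rely crucially on the surjectivity contained in $s(A)=A$; the rest of the proof is a direct packaging of the Chebyshev-center technology of Lemma~\ref{embed} with the amenability-nonexpansive fixed point theorem of Section~3.
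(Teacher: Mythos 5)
Your proposal is correct and follows essentially the same route as the paper, whose proof is just a compressed version of your checks: convexity and $S$-invariance of $E_{C}(A)$ (the invariance using $s(A)=A$ together with nonexpansivity exactly as you argue), weak compactness from Lemma~\ref{embed}, and then Theorem~\ref{LUC} applied to the restricted system. The only cosmetic quibble is that $x\mapsto\|x-y\|$ is convex rather than affine (though by Hahn--Banach it is a supremum of affine functions), so your convexity argument stands as stated.
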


\begin{proof}
Notice that $E_{C}(A)$ is convex, $s(E_{C}(A))\subset E_{C}(A)$ and it
follows from Lemma \ref{embed} that $E_{C}(A)$ is weakly compact. Now the
result follows from Theorem \ref{LUC}.
\end{proof}

In a similar way we can prove the theorem that drops the separability
assumption from \cite[Theorem 3.16]{LaZh2}.

\begin{theorem}
\label{W-em}Let $C$ be a bounded convex $L$-embedded subset of a Banach
space $X$ and let $(S,(C,\mathrm{weak}))$ be a nonexpansive dynamical system
\textit{such that the enveloping semigroup }$E(S,(C,\mathrm{weak}))$\textit{%
\ consists of (weakly) continuous functions}. If $WAP(S)$ has a LIM and $C$
contains a bounded subset $A$ such that $s(A)=A$ for all $s\in S$, then
there is a fixed point of $S$ in $E_{C}(A).$
\end{theorem}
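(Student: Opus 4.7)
The plan is to mimic the proof of Theorem \ref{L-em} verbatim, substituting Theorem \ref{WAP} for Theorem \ref{LUC}. The hypothesis has been adjusted accordingly: the continuity assumption on the action is dropped, and replaced by the weaker assumption that $E(S,(C,\mathrm{weak}))$ consists of weakly continuous functions, which is exactly the hypothesis of Theorem \ref{WAP} (property $(F)$).

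First, I would check that $E_{C}(A)$ is a weakly compact convex $S$-invariant subset of $C$. Convexity is immediate from the definition of the Chebyshev center and convexity of the norm. For $S$-invariance, fix $x\in E_{C}(A)$ and $s\in S$. Since $s(A)=A$, every $y\in A$ can be written as $y=sy'$ with $y'\in A$, and nonexpansivity yields
\begin{equation*}
\|sx-y\|=\|sx-sy'\|\leq\|x-y'\|\leq r_{C}(A),
\end{equation*}
so $sx\in E_{C}(A)$. Weak compactness of $E_{C}(A)$ is exactly the content of Lemma \ref{embed}.

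Next I would verify that the restricted dynamical system $(S,(E_{C}(A),\mathrm{weak}))$ still satisfies the hypotheses of Theorem \ref{WAP}. Nonexpansivity is inherited. For the enveloping semigroup hypothesis, consider the restriction map $\phi:C^{C}\to E_{C}(A)^{E_{C}(A)}$, which is continuous for the respective product topologies; since $E_{C}(A)$ is $S$-invariant, $\phi(\lambda_{s})=\lambda_{s}|_{E_{C}(A)}$, and by compactness of $E(S,(C,\mathrm{weak}))$ we get
\begin{equation*}
E(S,(E_{C}(A),\mathrm{weak}))\subset \phi\bigl(E(S,(C,\mathrm{weak}))\bigr).
\end{equation*}
Each element on the right is the restriction of a weakly continuous self-map of $C$ to the weakly compact set $E_{C}(A)$, hence is weakly continuous. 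Thus $E(S,(E_{C}(A),\mathrm{weak}))$ consists of weakly continuous functions.

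With $WAP(S)$ having a LIM, Theorem \ref{WAP} now furnishes a fixed point of $S$ in $E_{C}(A)$. The only step that is not purely a direct rehash of Theorem \ref{L-em} is the descent of the enveloping-semigroup condition to the $S$-invariant subset $E_{C}(A)$; this is the point I expect to require the most care, although the short argument above via the restriction map $\phi$ should settle it.
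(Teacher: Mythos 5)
Your overall route is exactly the paper's: show that $E_{C}(A)$ is convex, $S$-invariant (via $s(A)=A$ and nonexpansivity) and, by Lemma \ref{embed}, weakly compact, and then feed the restricted system into Theorem \ref{WAP}; the paper's own proof consists of precisely these two sentences and leaves the transfer of the enveloping-semigroup hypothesis to $E_{C}(A)$ implicit. The difficulty lies in the one step you add to make that transfer explicit: the claim that $E(S,(C,\mathrm{weak}))$ is compact is unjustified. Here $C$ is only a bounded, convex, $L$-embedded set; $L$-embedded sets are weakly closed but in general not weakly compact (the unit ball of $L^{1}$ is the standard example), and indeed if $C$ were weakly compact the $L$-embeddedness and Lemma \ref{embed} would be superfluous. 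Consequently $C^{C}$ (with $C$ carrying the weak topology) is not compact, the closure $E(S,(C,\mathrm{weak}))$ of the translations need not be compact, and your inclusion $E(S,(E_{C}(A),\mathrm{weak}))\subset\phi\bigl(E(S,(C,\mathrm{weak}))\bigr)$ does not follow: a net $\{\lambda_{s_{\alpha}}\}$ whose restrictions converge pointwise-weakly on $E_{C}(A)$ need not admit a subnet converging pointwise on all of $C$, because the orbits of points outside $E_{C}(A)$ live only in the non-weakly-compact set $C$. (A smaller, repairable issue: $\phi$ is not defined on all of $C^{C}$, since an arbitrary self-map of $C$ need not preserve $E_{C}(A)$; it is defined on $E(S,(C,\mathrm{weak}))$, because $E_{C}(A)$ is $S$-invariant and weakly closed.)

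So the argument by which you propose to settle the descent of the quasi-equicontinuity condition is exactly the compact-phase-space restriction argument, and compactness is not available here; as written, this step is a genuine gap. Everything else --- convexity and invariance of the Chebyshev center, its weak compactness via Lemma \ref{embed}, and the appeal to Theorem \ref{WAP} once property $(F)$'s hypotheses are secured for $(S,(E_{C}(A),\mathrm{weak}))$ --- coincides with the paper, which itself simply writes that the result follows from Theorem \ref{WAP} without addressing the point you correctly isolated as the one requiring care.
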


\begin{proof}
As before, $s(E_{C}(A))\subset E_{C}(A)$ for each $s\in S$ and $E_{C}(A)$ is
convex and weakly compact. Now the result follows from Theorem \ref{WAP}.
\end{proof}

Theorems \ref{L-em} and \ref{W-em} are related to Theorem A in the recent
paper of Bader, Gelander and Monod \cite{BGM} that was used to give a short
proof of the long-standing derivation problem for a convolution algebra $%
L^{1}(G)$ of a locally compact group $G.$ Notice that we have also
appropriate versions of the above theorems when $AP(S),WAP(S)\cap LUC(S)$ or
$LMC(S)$ have a LIM, as in the case of Theorem \ref{LUC star}.

Our final theorem concerns weak$^{\ast }$ compact convex sets in the dual of
an M-embedded Banach space. Lemma 3.2 in \cite{LaZh2} asserts that any weak$%
^{\ast }$ closed subset of the dual space of an $M$-embedded Banach space is
$L$-embedded. Therefore, we can use the results of Section 3 again.

\begin{theorem}
\label{M}Let $S$ be a semitopological semigroup and $(S,(K,\mathrm{weak}%
^{\ast }))$ a nonexpansive dynamical system, where $K$ is a weak$^{\ast }$
compact convex subset of the dual space $X^{\ast }$ of an $M$-embedded
Banach space $X.$ Suppose that one of the following conditions holds:
\end{theorem}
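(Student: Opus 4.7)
The plan is to leverage the $L$-embeddedness of $K$, afforded by Lemma 3.2 of \cite{LaZh2}, in order to reduce the weak$^{\ast}$ fixed point problem to a weak one, so that the theorems of Section 3 become applicable. I proceed in four steps that run uniformly over all the listed hypotheses, each of which I expect to pair a LIM on some subspace $Y\in\{LUC(S),\,WAP(S),\,WAP(S)\cap LUC(S),\,AP(S),\,LMC(S)\}$ with a weak$^{\ast}$ continuity or equicontinuity assumption on the action.

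First, using Kuratowski--Zorn, I extract a minimal weak$^{\ast}$ compact convex $S$-invariant subset $C\subset K$ and, inside it, a minimal weak$^{\ast}$ compact $S$-invariant subset $F$; both are $L$-embedded as weak$^{\ast}$ closed subsets of the dual of an $M$-embedded space. Second, each listed continuity assumption forces $s\mapsto f(sy)$ to belong to $Y$ for every weak$^{\ast}$ continuous $f\colon F\to\mathbb{C}$ and every $y\in F$, by the same arguments used in the proofs of Theorems \ref{LUC}--\ref{LMC}. As in the proof of Theorem \ref{Main star}, the LIM then produces an $S$-invariant weak$^{\ast}$ Radon probability measure $\mu$ on $F$; the support argument of Lemma \ref{local} (transposed to the weak$^{\ast}$ setting) gives $s(\mathrm{supp}(\mu))=\mathrm{supp}(\mu)$ for each $s\in S$, and minimality of $F$ forces $F=\mathrm{supp}(\mu)$, so that $s(F)=F$ for every $s\in S$.

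Third, I form the Chebyshev center
\[
E_C(F)=\{\,x\in C:\sup_{y\in F}\|x-y\|=r_C(F)\,\}.
\]
Weak$^{\ast}$ lower semicontinuity of the norm makes $E_C(F)$ nonempty, convex and weak$^{\ast}$ closed in $C$, while the $L$-embeddedness of $C$ together with Lemma \ref{embed} makes it weakly compact. The invariance $s(F)=F$ combined with nonexpansivity gives $s(E_C(F))\subset E_C(F)$, and minimality of $C$ now forces $E_C(F)=C$. Hence $C$ itself is weakly compact.

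Finally, on the weakly compact set $C$ the weak and weak$^{\ast}$ topologies coincide, since weak is finer than weak$^{\ast}$ on $X^{\ast}$ and both are compact Hausdorff on $C$. Consequently every weak$^{\ast}$ continuity or equicontinuity assumption on $(S,(K,\mathrm{weak}^{\ast}))$ transfers to the corresponding weak assumption on $(S,(C,\mathrm{weak}))$, and the appropriate theorem among \ref{LUC}, \ref{WAP}, \ref{WAPLUC}, \ref{AP}, \ref{LMC} yields a fixed point of $S$ in $C\subset K$. I expect the principal obstacle to be the third step: without the identification $E_C(F)=C$ one retains only weak$^{\ast}$ compactness of $C$ and the passage to Section 3 fails, and this identification depends crucially on both the $L$-embeddedness of $C$ and on the strong invariance $s(F)=F$ extracted via the support argument.
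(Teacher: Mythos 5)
Your argument is correct and relies on the same toolkit as the paper's proof (the membership lemmas turning the LIM into an $S$-invariant weak$^{\ast}$ Radon measure, the support argument of Lemma \ref{local} giving $s(\mathrm{supp}(\mu))=\mathrm{supp}(\mu)$, Lemma 3.2 of \cite{LaZh2} giving $L$-embeddedness of weak$^{\ast}$ closed subsets of $X^{\ast}$, and Lemma \ref{embed} giving weak compactness of the Chebyshev center), but it is organized genuinely differently. The paper never passes to minimal sets: it takes $K_0=\mathrm{supp}(\mu)$ inside $K$ itself, notes $s(K_0)=K_0$, forms $E_K(K_0)$, which is weakly compact and $S$-invariant, and then verifies the hypotheses of the appropriate Section 3 theorem directly on $E_K(K_0)$ by subnet arguments (for instance, deducing weak equicontinuity of $\{\lambda_s\}$ on $E_K(K_0)$ from weak$^{\ast}$ equicontinuity on $K$). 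Your route instead uses minimality of $C$ to force $E_C(F)=C$, so that the whole minimal set is weakly compact, and then transfers the weak$^{\ast}$ hypotheses through the coincidence of the weak and weak$^{\ast}$ topologies on $C$; the identification $E_C(F)=C$ is justified exactly as you say, since $E_C(F)$ is nonempty, convex, weak$^{\ast}$ compact and $S$-invariant thanks to $s(F)=F$. What your version buys is a cleaner, uniform reduction; what the paper's version buys is avoiding Zorn's lemma and making the continuity transfer explicit.

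Two points should be tightened. For condition (i), equicontinuity is a statement about differences, so ``weak $=$ weak$^{\ast}$ on $C$'' is not literally enough: you need the two topologies to agree on $C-C$ (true, since $C-C$ is weakly compact as the image of $C\times C$ under subtraction), or else run the paper's subnet argument verbatim on $C$; similarly, for (ii) and (iv) you should record that every element of $E(S,(C,\mathrm{weak}^{\ast}))$ is the restriction of an element of $E(S,(K,\mathrm{weak}^{\ast}))$ (a routine compactness argument), hence weak$^{\ast}$, and therefore weakly, continuous on $C$. Finally, the full theorem also asserts that $F(S)$ is a nonexpansive retract of $K$; since your fixed-point argument applies to every weak$^{\ast}$ compact convex $S$-invariant subset of $X^{\ast}$ (in particular to the sets $\hat K=\{Tx:T\in\hat S\}$), the retraction follows exactly as in the proof of Theorem \ref{LUC star} via Theorem \ref{Bruck}, but this part should be stated.
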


\begin{enumerate}
\item[(i)] $AP(S)$ has a LIM and the translations $\lambda _{s}:K\rightarrow
K$\ are weak$^{\ast }$ equicontinuous

\item[(ii)] $WAP(S)$ has a LIM and the enveloping semigroup $E(S,K)$\
consists of weak$^{\ast }$ continuous functions

\item[(iii)] $LMC(S)$ has a LIM

\item[(iv)] $WAP(S)\cap LUC(S)$ has a LIM, the enveloping semigroup $E(S,K)$%
\ consists of weak$^{\ast }$ continuous functions and the action $\pi
:S\times K\rightarrow K$ is jointly continuous when $K$ is given weak$^{\ast
}$ topology

\item[(v)] $LUC(S)$ has a LIM and the action $\pi :S\times K\rightarrow K$
is jointly continuous when $K$ is given weak$^{\ast }$ topology.
\end{enumerate}

Then there is a fixed point of $S$ in $K$ and the set $F(S)$ of fixed points
is a nonexpansive retract of $K$.

\begin{proof}
All the proofs follow the same pattern, so we only prove (i). Since the
translations $\lambda _{s}:K\rightarrow K$\ are weak$^{\ast }$
equicontinuous, it follows from \cite[Lemma 3.1]{La} that the function $%
f_{y}(s)=f(sy)\in AP(S)$ for every weak$^{\ast }$ continuous $f:K\rightarrow
\mathbb{C}$ and $y\in K.$ By assumption, there is a left invariant mean $m$
on $AP(S)$, fix $y\in K$, and define a positive functional $\Phi $ on $C(K)$%
, the space of weak$^{\ast }$ continuous complex-valued functions on $K,$ by%
\begin{equation*}
\Phi (f)=m(f_{y})
\end{equation*}%
for $f\in C(K).$ Let $\mu $ be the probability $S$-invariant weak$^{\ast }$
Borel Radon measure on $K$ corresponding to $\Phi $ and denote by $K_{0}=%
\mathrm{supp}(\mu )$ its (weak$^{\ast }$) support. Then $s(K_{0})=K_{0}$ and
hence $s(E_{K}(K_{0}))\subset E_{K}(K_{0})$ for each $s\in S.$ Moreover, by
\cite[Lemma 3.2]{LaZh2}, $K$ is $L$-embedded and hence $E_{K}(K_{0})$ is
weakly compact by Lemma \ref{embed}. We show that $\{\lambda _{s}\}_{s\in S}$
are weakly equicontinuous on $E_{K}(K_{0}).$ Otherwise, there exist a weak
neighbourhood $V$ of $0$ and the nets $\{\lambda _{s_{\alpha }}\}$ and $%
\{x_{\alpha }\},\{y_{\alpha }\}\subset E_{K}(K_{0})$ such that $w$-$\lim
(x_{\alpha }-y_{\alpha })=0$ and $s_{\alpha }x_{\alpha }-s_{\alpha
}y_{\alpha }\notin V.$ Hence $w^{\ast }$-$\lim (x_{\alpha }-y_{\alpha })=0$
and from weak$^{\ast }$ equicontinuity of $\{\lambda _{s}\},$ $w^{\ast }$-$%
\lim (s_{\alpha }x_{\alpha }-s_{\alpha }y_{\alpha })=0.$ Since $E_{K}(K_{0})$
is weakly compact, there are subnets $\{\lambda _{s_{\varphi (\beta )}}\}$, $%
\{x_{\varphi (\beta )}\}$, $\{y_{\varphi (\beta )}\}$ such that $s_{\varphi
(\beta )}x_{\varphi (\beta )}-s_{\varphi (\beta )}y_{\varphi (\beta )}$
converges weakly and hence also weak$^{\ast }$ to $z\neq 0$, and we obtain a
contradiction. In a similar way, we show that the action $\pi :S\times
E_{K}(K_{0})\rightarrow E_{K}(K_{0})$ is separately continuous when $K$ is
given the weak topology, that is, $(S,(E_{K}(K_{0}),\mathrm{weak}))$ is a
dynamical system. Now we can apply Theorem \ref{AP} to get a fixed point of $%
S$ in $E_{K}(K_{0}).$ As in the proof of Theorem \ref{LUC star}, the set $%
F(S)$ of fixed points is a nonexpansive retract of $K$.
\end{proof}

Theorem \ref{M} (v) gives a partial solution to Lau's problem and
complements \cite[Theorem 3.18]{LaZh2}, where a similar statement was proved
for [metrizable] left reversible semitopological semigroups.

If we analyse Lemma \ref{banach} and the proof of the above theorem we
conclude that it holds for any weak$^{\ast }$ compact convex subset $K$ of a
dual Banach space provided there exists a weak$^{\ast }$ Borel $S$-invariant
Radon probability measure $\mu $ on each minimal weak$^{\ast }$ compact $S$%
-invariant subset of $K$ such that $\mathrm{supp}_{\left\Vert \cdot
\right\Vert }(\mu )\neq \emptyset .$ Therefore, the following question is
central for our approach to Lau's problem described at the beginning of this
section:

\begin{quest}
Does there exist a weak$^{\ast }$ compact set $K$ and a Radon probability
measure on the sigma-algebra $\mathcal{B}(K,\mathrm{weak}^{\ast })$ of weak$%
^{\ast }$ Borel subsets of $K$ such that $\mathrm{supp}_{\left\Vert \cdot
\right\Vert }(\mu )=\emptyset $?
\end{quest}

A negative answer to this question yields a complete, affirmative solution
to Lau's problem.

\end{document}